\newcommand{\beq}{\begin{equation}}
\newcommand{\eeq}{\end{equation}}
\newcommand{\bea}{\begin{eqnarray}}
\newcommand{\eea}{\end{eqnarray}}
\newcommand{\noi}{\noindent}
\newcommand{\nn}{\nonumber}
\newtheorem{pro}{Proposition}
\newtheorem{lemma}{Lemma}
\newtheorem{definition}{Definition}
\newtheorem{theorem}{Theorem}
\newcommand{\C}{\mathbb{C}}
\begin{document}
\title{Singularity confinement for matrix discrete Painlev\'{e} equations}
\author{Giovanni A. Cassatella-Contra} \address{Departamento de F\'{\i}sica
  Te\'{o}rica II (M\'{e}todos Matem\'{a}ticos de la F\'{\i}sica), Facultad de
  F\'{\i}sicas, Universidad Complutense de Madrid, 28040 Madrid, Spain}
  \thanks{GC-C benefitted of the financial support of a ``Acci\'{o}n Especial'' Ref. AE1/13-18837 of the Universidad Complutense de Madrid}
\email{gaccontra@fis.ucm.es} \author{ Manuel Ma\~{n}as}
\thanks{MM thanks economical support from the Spanish ``Ministerio de Econom\'{\i}a y Competitividad" research project MTM2012-36732-C03-01,  \emph{Ortogonalidad y aproximacion; Teoria y Aplicaciones}}\email{manuel.manas@ucm.es}\author{Piergiulio Tempesta}\address{Departamento de F\'{\i}sica
  Te\'{o}rica II (M\'{e}todos Matem\'{a}ticos de la F\'{\i}sica), Facultad de
  F\'{\i}sicas, Universidad Complutense de Madrid, 28040 Madrid, Spain and Instituto de Ciencias Matem\'{a}ticas, C/ Nicol\'{a}s Cabrera, No 13--15, 28049 Madrid, Spain}\thanks{ PT has been
supported by  Spanish ``Ministerio de Ciencia e Innovaci\'{o}n" grant FIS2011--00260.}
\email{p.tempesta@fis.ucm.es}
\keywords{Discrete integrable systems, noncommutative discrete Painlev\'{e} I equation, singularity confinement,  Schur complements}
\subjclass{46L55,37K10,37L60}
\maketitle
\begin{abstract}
We study the analytic properties of a matrix discrete system introduced in \cite{CM}. The singularity confinement for this system
is shown to hold generically, i.e. in the whole space of parameters except possibly for algebraic subvarieties. This paves the way to a generalization of Painlev\'{e} analysis to discrete matrix models.
\end{abstract}
\section{Introduction}
Since the discovery of the \textit{Painlev\'e property} for ordinary differential equations at the end of
the XIX century \cite{Pain}, the notion of
 \textit{integrability} has been
related to the local analysis of movable isolated singularities of solutions
of dynamical systems
 \cite{Conte}. This approach to integrability has opened
an alternative perspective compared to the standard algebraic approach
\textit{\`a la
 Liouville}, based on the existence of a suitable number of
functionally independent integrals of motion. Both points of view have been
extended to the study of evolution equations on a discrete background.

Integrable discrete systems, for several aspects more fundamental objects than
the continuous ones, are ubiquitous both in pure
 and applied mathematics,
and in theoretical physics as well. They possess rich algebraic--geometric
properties \cite{AB}, \cite{BS1}, \cite{MV}, \cite{Nov3}, \cite{Tsuda} and are
relevant, for instance, in the regularization of quantum field theories in a
lattice and in discrete quantum
 gravity \cite{FIK}, \cite{thooft}. 

 In particular, the problem of integrability
 preserving
discretizations of partial differential equations has become a very active research area \cite{Suris},
and has been widely investigated with both geometrical and algebraic methods
\cite{BS1}, \cite{BS2}, \cite{Nov1}, \cite{Temp}.


The approach known as \textit{singularity confinement}, introduced in \cite{GRP}, is the equivalent for discrete systems of the singularity analysis for continuous dynamical systems. It essentially relies on the observation that for integrable discrete models, if a singularity appears in some specific point of the lattice of the independent variable, then it would disappear after making evolve the system via a finite number of iterations. Alternative, related approaches are based on the notion of algebraic entropy \cite{BV}, \cite{LRG} or on Nevalinna theory \cite{AHH}, \cite{RGTT}. A large class of difference equations coming from unitary integrals and combinatorics possess the confinement property \cite{AMV}.
However, observe that singularity confinement, in despite of being extremely useful in isolating integrability, it might not be a sufficient condition for integrability as was noticed by Hietarinta and Viallet \cite{hietarinta-viallet}.


The purpose of this paper is to start a theoretical study of the singularity confinement property for \textit{matrix integrable systems}.
Indeed, we hypothesize that the singularity
analysis has for matrix systems the same relevance that possesses for both discrete and continuous scalar models.

Apart its intrinsic mathematical interest, the study of matrix discrete dynamical systems can also be related, from an applicative point of view, to the theory of complex networks \cite{Newbook}. Indeed, given a
random graph with $N$
vertices, one associates with it the adjacency matrix, which is a $N\times N$ matrix, whose entries $a_{ij}$ represent the number of links associated with the nodes $i$ and $j$ $(i,j=1,\ldots, N)$. The discrete time evolution of the topology of the network would provide a difference equation for the adjacency matrix, defining a discrete matrix model.

Hereafter, we shall focus on the singularity confinement of the following discrete matrix equation
\begin{eqnarray}
 \beta_{n+1}&=&n\beta_{n}^{-1}-\beta_{n-1}-\beta_{n}-\alpha,  \qquad n=1,2,\ldots
\label{eq:painleveS}
\end{eqnarray}
where $\beta_n\in\C^{N\times N}$ is a $N \times N$ complex matrix.

Equation \eqref{eq:painleveS} can be considered a kind of non Abelian matrix version of the discrete Painlev\'e equation
(dPI). It has been introduced in \cite{CM} and describes the recursion relation for the matrix coefficients of a
class of Freud matrix orthogonal polynomials with a quartic potential \cite{freud}. It is obtained by solving the related Riemann--Hilbert
problem.
In that paper we also proved the singularity confinement in a simple situation, when the initial data are triangular matrices up to similarity transformations. The aim of this paper is to extend this result to the general case.
 This  extension have required much more effort that in the simple triangularizable situation but finally we succeeded in getting the desired proof. The difficulty  mainly resides in the analysis of the genericness of the result given in Theorem \ref{generico}.

\subsection{Preliminary discussion}

Let us present here the simplest case of singularity analysis for the matrix model \eqref{eq:painleveS} which parallels the results for the standard discrete Painlev\'{e} I equation.
We assume that
\begin{align}
\beta_{m-1}&=\beta_{m-1,0}+
\beta_{m-1,1}\epsilon+O(\epsilon^2), &\beta_{m}&=\beta_{m,1}\epsilon+\beta_{m,2}\epsilon^{2}
+O(\epsilon^3),&\epsilon&\rightarrow0,
\label{eq:betaunoII}
\end{align}
with $\det\beta_{m,1}\neq0$.
If we introduce conditions
\eqref{eq:betaunoII} into \eqref{eq:painleveS}, we have that
\begin{align}
\beta_{m+1}=m\beta_{m,1}^{-1}\epsilon^{-1}
+\beta_{m+1,0}+\beta_{m+1,1}\epsilon+\beta_{m+1,2}\epsilon^2+O(\epsilon^3),
\label{eq:C2primow}
\end{align}
 where
\begin{align*}
\beta_{m+1,0}=&-m\beta_{m,1}^{-1}\beta_{m,2}\beta_{m,1}^{-1}
-\beta_{m-1,0}-\alpha,\\
\beta_{m+1,1}=&m\beta_{m,1}^{-1}(\beta_{m,2}\beta_{m,1}^{-1}\beta_{m,2}
-\beta_{m,3})\beta_{m,1}^{-1}-\beta_{m,1}-\beta_{m-1,1},\\
\beta_{m+1,2} =&m\Big(\beta_{m,2}\beta_{m,1}^{-1}(\beta_{m,3}
-\beta_{m,2}\beta_{m,1}^{-1}\beta_{m,2})+\beta_{m,3}\beta_{m,1}^{-1}\beta_{m,2}-\beta_{m,4}\Big)\beta_{m,1}^{-2}-\beta_{m,2}-\beta_{m-1,2}.
\end{align*}
 Thus, the pole singularity has shown up, and it will survive still for another step in the sequence. Indeed, we have that
\begin{align}
\beta_{m+2}=-m\beta_{m,1}^{-1}\epsilon^{-1}
+\beta_{m+2,0}+\beta_{m+2,1}\epsilon+\beta_{m+2,2}\epsilon^2+O(\epsilon^3),
\label{eq:C3primow}
\end{align}
 where
\begin{align*}
\beta_{m+2,0}=&m\beta_{m,1}^{-1}\beta_{m,2}\beta_{m,1}^{-1}+\beta_{m-1,0},\\
\beta_{m+2,1}=&\frac{(m+1)}{m}\beta_{m,1}-
m\beta_{m,1}^{-1}\beta_{m,2}\beta_{m,1}^{-1}\beta_{m,2}\beta_{m,1}^{-1}
+m\beta_{m,1}^{-1}\beta_{m,3}\beta_{m,1}^{-1}+\beta_{m-1,1},\\
\beta_{m+2,2}=&\frac{(m+1)}{m}\beta_{m,2}
+\frac{(m+1)}{m^2}\beta_{m,1}(\beta_{m-1,0}+\alpha)\beta_{m,1}+m\beta_{m,2}\beta_{m,1}^{-1}(\beta_{m,2}\beta_{m,1}^{-1}
\beta_{m,2}\beta_{m,1}^{-2}-\beta_{m,3}\beta_{m,1}^{-2})\\
&-m\beta_{m,3}\beta_{m,1}^{-1}\beta_{m,2}\beta_{m,1}^{-2}
+\beta_{m-1,2}+m\beta_{m,4}\beta_{m,1}^{-2}.
\end{align*}

\noi We easily check that in the third step the zero appears again
\begin{align}
\beta_{m+3}=&\frac{-(m+3)}{m}\beta_{m,1}\epsilon+\beta_{m+3,2}\epsilon^{2}
+O(\epsilon^3),
\label{eq:C4primow}
\end{align}
 where
\begin{align*}
\beta_{m+3,2}:=&-\frac{(m+3)}{m}\beta_{m,2}
-\frac{(2m+3)}{m^2}\beta_{m,1}\beta_{m-1,0}\beta_{m,1}
-\frac{(m+1)}{m^2}\beta_{m,1}\alpha\beta_{m,1}.
\end{align*}
 Finally, if we substitute \eqref{eq:C3primow} and \eqref{eq:C4primow} into
 \eqref{eq:painleveS} we get no singularity at all:
\begin{align*}
\beta_{m+4}=\frac{m}{(m+3)}\beta_{m-1,0}-\frac{2}{(m+3)}\alpha
+O(\epsilon).
\end{align*}
 Observe that $\beta_{m+3}=O(\epsilon)$, $\beta_{m+4}=O(1)$ and
 $\det\beta_{m+4}=O(1)$ for $\epsilon\rightarrow0$. Thus, unless
\begin{align}
  \label{non}
\det(m\beta_{m-1,0}-2\alpha)=0,
\end{align}
we obtain singularities in the step just after the appearance of a \emph{zero}
in $\beta_{m}$, with the poles appearing in the sites $m+1,m+2$. Then we have
a \emph{zero} for $m+3$ while we recover the standard behaviour for $m+4$. A
crucial point is that this singularity confinement holds whenever \eqref{non}
is \textit{not} satisfied. This observation motivates the definitions proposed
in the following discussion.
\begin{definition}
Whenever the singularity confinement property is satisfied in the whole space
$\mathcal{S}$ of
 parameters except possibly for a set of algebraic
subvarieties $\mathcal{W}_i\in\mathcal{S}, i=1,,,j\in\mathbb{N}$, we shall
say that the property is satisfied \textit{generically}.
\end{definition}
 In this case we will speak about the genericness of the singularity
 confinement.

\begin{definition}
 We shall define the confinement time as the minimum number $\ell\in\mathbb N$
 of iterations or steps in the lattice, after the appearance of
 a zero,
 necessary to recover the form without poles or zeros.
\end{definition}

Thus, in the above case we have generically a singularity confinement with a confinement time $l=4$.

A simple but fundamental observation for the sequel of the paper is the following one.
\begin{lemma}
The matrix system (\ref{eq:painleveS}) is invariant under similarity transformations.
\begin{proof}
Observe that
\[
M\beta_{n+1}M^{-1}=nM\beta_{n}^{-1}M^{-1}-M\beta_{n-1}M^{-1}-M\beta_{n}M^{-1}
-M{\alpha}M^{-1}.
\]
Therefore, we obtain
\begin{equation}
\phi_{n+1}=n\phi_{n}^{-1}-\phi_{n-1}-\phi_{n}-\delta,
\nonumber
\end{equation}
\noindent where $\phi_{n}${$:=$}$M\beta_{n}M^{-1}$ and
$\delta${$:=$}$M\alpha{M}^{-1}$.
\end{proof}
\end{lemma}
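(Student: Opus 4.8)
The plan is to use the fact that conjugation by a fixed invertible matrix $M\in\C^{N\times N}$ is a unital $\C$-algebra automorphism of $\C^{N\times N}$: the map $X\mapsto MXM^{-1}$ is additive, multiplicative, fixes the identity, and commutes with multiplication by scalars. First I would fix such an $M$ and apply this map to both sides of \eqref{eq:painleveS}. On the right-hand side the three additive terms transform termwise, the integer coefficient $n$ passes through untouched because $\C$ is central, and the only point requiring a line of verification is the inverse term: one checks $M\beta_n^{-1}M^{-1}=(M\beta_nM^{-1})^{-1}$ by observing that $(M\beta_nM^{-1})(M\beta_n^{-1}M^{-1})=M(\beta_n\beta_n^{-1})M^{-1}=I$. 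This is the only place where invertibility of $\beta_n$ (and of $M$) is used.

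Then I would introduce the transported variables $\phi_n:=M\beta_nM^{-1}$ together with the transported parameter $\delta:=M\alpha M^{-1}$, and simply read off that the conjugated relation is $\phi_{n+1}=n\phi_n^{-1}-\phi_{n-1}-\phi_n-\delta$, which is exactly \eqref{eq:painleveS} with $(\beta,\alpha)$ replaced by $(\phi,\delta)$. Hence $M$ carries solutions of the system with parameter $\alpha$ to solutions of the system with parameter $M\alpha M^{-1}$; in particular, when $M$ commutes with $\alpha$ (and always for the model with $\alpha=0$) it is an honest symmetry of a single equation.

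There is really no substantive obstacle here; the only subtlety worth flagging is purely one of bookkeeping about the parameter. The statement is most naturally a \emph{covariance} of the whole $\alpha$-family of recursions rather than an invariance of one fixed equation, since $\alpha$ is dragged along to $M\alpha M^{-1}$. For the use made of it in the sequel -- reducing arbitrary initial data $(\beta_{m-1},\beta_m)$ to convenient normal forms under $GL(N,\C)$ while keeping track of how $\alpha$ changes -- this covariant formulation is exactly what is needed, so I would state and prove the Lemma in that form and leave it at that.
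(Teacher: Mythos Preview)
Your proof is correct and follows essentially the same approach as the paper: conjugate both sides of \eqref{eq:painleveS} by a fixed invertible $M$, use that conjugation respects sums, scalars, and inverses, and read off the same recursion for $\phi_n=M\beta_nM^{-1}$ with parameter $\delta=M\alpha M^{-1}$. Your additional remark that this is strictly a covariance of the $\alpha$-family (with invariance of a single equation only when $M$ commutes with $\alpha$) is a fair clarification, and your explicit check that $M\beta_n^{-1}M^{-1}=(M\beta_nM^{-1})^{-1}$ makes one step more transparent than in the paper, but neither changes the argument.
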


\subsection{Main result}
The ideas developed within this example will be used in the subsequent
considerations to study the confinement of the singularities of the matrix dPI
model \eqref{eq:painleveS}.
 We shall assume that
\begin{align}
\beta_{m-1}&=\beta_{m-1,0}+
\beta_{m-1,1}\epsilon+O(\epsilon^2),& &&\epsilon&\rightarrow0,
\label{eq:betacero}
\\
\beta_{m}&=\beta_{m,0}+\beta_{m,1}\epsilon+O(\epsilon^2),&  \det\beta_{m}&=O(\epsilon^r),&
\epsilon&\rightarrow0,
\label{eq:betauno}
\end{align}
where $\beta_{m-1,i},\beta_{m,i}\in\C^{N\times N}$ and
$r=1,\dots,N$. Consequently, we can distinguish two cases.
\begin{itemize}
\item{$r=N$.} This is the maximal rank case; for it we have that
\begin{align*}
\beta_{m,0}&=0,&\det\beta_{m,1}&\neq0.
\end{align*}
It presents singularity confinement generically.
\item{$r\leq{N-1}$.} For the non-maximal rank case we instead have
\begin{align}
\dim\operatorname{Ran}\beta_{m,0}&=N-r,\nonumber\\
  \det\beta_{m}&=O(\epsilon^r),&\epsilon&\rightarrow0.
\label{eq:condicionB}
\end{align}
\end{itemize}
As will be proven later, by using the invariance under a similarity
transformation, one can assume that the matrices $\beta$ will have the form
expressed by eq. \eqref{eq:betauno2}. So said, we can state the main result of
the paper as follows.
\begin{theorem}\label{main}
 If $\beta_{m-1}$ and $\beta_{m}$ are of the form \eqref{eq:betacero},
 \eqref{eq:betauno} and \eqref{eq:betauno2}, and the following conditions for
 $\epsilon\to 0$ are satisfied
\begin{align}
\det\beta_{m+1}=O(\epsilon^{-r}),
\label{eq:invert1}\\
                     \det\beta_{m+2}=O(\epsilon^{-r}),
\label{eq:invert2}\\
\det\beta_{m+3}=O(\epsilon^{r}),
\label{eq:invert3}\\
\det\beta_{m+4}=O(1),
\label{eq:invert4}\end{align}
then, there is singularity confinement for the dPI model  \eqref{eq:painleveS} with
confinement time $l=4$.
\end{theorem}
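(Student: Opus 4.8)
The plan is to follow the same four–step Laurent expansion strategy as in the triangularizable case treated in \cite{CM}, but carried out intrinsically in terms of the matrices $\beta_{m,0},\beta_{m,1},\dots$ together with the normal form \eqref{eq:betauno2}. First I would set up the expansion: writing $\beta_{m-1}$ and $\beta_m$ as in \eqref{eq:betacero}, \eqref{eq:betauno} with $\operatorname{rank}\beta_{m,0}=N-r$, I would use the invariance under similarity transformations (the Lemma above) to bring $\beta_m$ to the block form \eqref{eq:betauno2}, in which the ``small'' $r\times r$ block is $O(\epsilon)$ and invertible to leading order, while the complementary block is $O(1)$. The natural tool here is the Schur complement: inverting $\beta_m$ amounts to inverting the $r\times r$ corner (which contributes the $\epsilon^{-1}$ pole) plus a regular part, and the Schur complement identity organizes $\beta_m^{-1}$ into a principal part supported on the degenerate block plus an $O(1)$ remainder. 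This is exactly what makes $\det\beta_m=O(\epsilon^r)$ and explains why $n\beta_m^{-1}$ injects a simple pole of ``rank $r$'' into $\beta_{m+1}$.

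Next I would propagate this through \eqref{eq:painleveS} four times. Substituting the expansion of $\beta_m^{-1}$ into $\beta_{m+1}=m\beta_m^{-1}-\beta_{m-1}-\beta_m-\alpha$ gives $\beta_{m+1}=A_{-1}\epsilon^{-1}+A_0+A_1\epsilon+\cdots$ with $A_{-1}$ of rank $r$; condition \eqref{eq:invert1}, $\det\beta_{m+1}=O(\epsilon^{-r})$, records that the pole part of $\beta_{m+1}$ is exactly of rank $r$ (no accidental extra degeneracy), so that $\beta_{m+1}^{-1}$ has a well-controlled $O(\epsilon^r)\!\cdot\!(\cdots)$ behaviour—again via a Schur complement with respect to the $r$-dimensional range of $A_{-1}$. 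Feeding this back yields $\beta_{m+2}$, which by the scalar heuristic should again carry a rank-$r$ pole whose leading coefficient is $-A_{-1}$ up to the explicit scalar factors seen in \eqref{eq:C3primow}; \eqref{eq:invert2} is the corresponding non-degeneracy statement. Iterating once more produces $\beta_{m+3}$ in which, by the same cancellation mechanism as in \eqref{eq:C4primow}, the pole disappears and a \emph{zero} of order matching $\det\beta_{m+3}=O(\epsilon^r)$ appears, and the fourth iteration produces $\beta_{m+4}=O(1)$ with, by \eqref{eq:invert4}, $\det\beta_{m+4}=O(1)$. At that point all poles and zeros are gone and the recursion has returned to the generic regime, which is precisely singularity confinement with $\ell=4$.

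The main obstacle is the bookkeeping at each step that the pole part really has rank exactly $r$ and that the leading coefficients cancel in the right pattern; in the scalar case this is automatic from the explicit formulas \eqref{eq:C2primow}–\eqref{eq:C4primow}, but in the matrix case the pole coefficients are products like $m\beta_{m,1}^{-1}\beta_{m,2}\beta_{m,1}^{-1}$ restricted to a moving subspace, and one must verify that the conjugation by the Schur complement blocks does not enlarge the support of the singular part. This is exactly where hypotheses \eqref{eq:invert1}–\eqref{eq:invert4} enter: rather than proving these rank conditions from scratch (which is the content of the genericness statement, Theorem~\ref{generico}), here I would simply \emph{assume} them and show they are sufficient to force the confinement pattern. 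Concretely, the step structure is: (i) normal form for $\beta_m$ via similarity plus Schur complement; (ii) compute $\beta_{m+1}$ and, using \eqref{eq:invert1}, invert it to the needed order; (iii) compute $\beta_{m+2}$ and, using \eqref{eq:invert2}, invert it; (iv) compute $\beta_{m+3}$, observe the pole cancels and—using \eqref{eq:invert3}—that it is an $O(\epsilon)$ zero of the predicted order; (v) compute $\beta_{m+4}$ and, using \eqref{eq:invert4}, conclude it is regular and invertible, so no singularity is transmitted to $\beta_{m+5}$ and beyond. The residual delicate point is purely algebraic: keeping track of the explicit $n$-dependent scalar prefactors (the $\tfrac{m+1}{m}$, $\tfrac{m+3}{m}$, etc., visible in the preliminary discussion) so that the cancellation producing the zero at step three is exact and not merely leading-order.
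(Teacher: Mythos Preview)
Your overall architecture---iterate four times, track the rank-$r$ singular part, and use \eqref{eq:invert1}--\eqref{eq:invert4} as non-degeneracy inputs---matches the paper's, and steps (i)--(iv) would go through essentially as you describe. The paper organizes this more cleanly by introducing the two subrings $\mathcal A_{\mathfrak K}$ and $\mathcal A_{\mathfrak L}$ of the asymptotic-expansion ring $\mathcal A$ and the ideal relations of Proposition~\ref{pro:rings}; with these, the facts $\beta_m,\beta_{m+1}^{-1},\beta_{m+2}^{-1}\in\mathcal A_{\mathfrak K}$ and $\beta_{m+1},\beta_{m+2},\beta_{m+3}^{-1}\in\epsilon^{-1}\mathcal A_{\mathfrak L}$ replace your Schur-complement bookkeeping, and the telescoping identity $\beta_{m+3}=\beta_m-(m+1)\beta_{m+1}^{-1}+(m+2)\beta_{m+2}^{-1}$ makes the cancellation at step~(iv) immediate rather than a coefficient chase.

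Where your plan has a genuine gap is step~(v). You write ``compute $\beta_{m+4}$ and, using \eqref{eq:invert4}, conclude it is regular and invertible,'' but \eqref{eq:invert4} is a condition on $\det\beta_{m+4}$, not on the entries of $\beta_{m+4}$; a matrix in $\epsilon^{-1}\mathcal A_{\mathfrak L}$ can perfectly well have a nonzero pole part while its determinant is $O(1)$. Since $\beta_{m+4}=(m+3)\beta_{m+3}^{-1}-\beta_{m+2}-\beta_{m+3}-\alpha$ contains \emph{two} rank-$r$ pole terms, namely $(m+3)\beta_{m+3}^{-1}$ and $-\beta_{m+2}$, with a priori unrelated leading coefficients, you cannot simply observe a cancellation as you did for $\beta_{m+3}$. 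The paper's key device here is to factor
\[
\beta_{m+4}=\beta_{m+3}^{-1}\,A+O(1),\qquad A:=(m+3)\mathbb I_N-\beta_{m+3}\beta_{m+2},
\]
and then, by substituting the recursion twice, to rewrite $A$ entirely in terms of $\beta_m$, $\beta_{m+1}^{-1}$, $\beta_{m-1}$ and $\alpha$; since $\beta_m,\beta_{m+1}^{-1}\in\mathcal A_{\mathfrak K}$ and $\mathcal A_{\mathfrak K}$ is a right ideal, one gets $A\in\mathcal A_{\mathfrak K}$, whence $\beta_{m+3}^{-1}A\in\epsilon^{-1}\mathcal A_{\mathfrak L}\cdot\mathcal A_{\mathfrak K}\subset\mathcal A$ by Proposition~\ref{pro:rings}(6). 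Only \emph{after} this is established does \eqref{eq:invert4} enter, to guarantee that $\beta_{m+4,0}$ is invertible and the confinement is complete. Your ``residual delicate point'' about the scalar prefactors $\tfrac{m+1}{m},\tfrac{m+3}{m}$ is not where the difficulty lies; the real issue is this algebraic identity showing $A\in\mathcal A_{\mathfrak K}$, which your outline does not anticipate.
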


\section{$N${$\times$}$N$ matrix asymptotic expansions and singularity confinement}\label{a2}

 \noi In this section we will consider the set of matrix asymptotic expansions
\begin{eqnarray*}
\mathcal A= \C^{N\times N}(\!(\epsilon)\!)&:=\big\{M_0+M_1\epsilon+O(\epsilon^2),\,\epsilon\to 0,
  M_i\in{\mathbb C}^{N\times N}\big\}.\label{eq:tipoT}
\end{eqnarray*}

 \noi This set is a ring with
 identity, given by the matrix ${\mathbb
  I}_N$. For each possible rank (see eq. \eqref{eq:betauno2}) $r=1,\dots, N-1$
we will use the block notation
\begin{align}
\nn
\label{Mblocks}
M&:=\begin{pmatrix}
  A & B   \\
 C  & D
\end{pmatrix},& A\in\C^{r\times r}, B\in\C^{r\times(N-r)},C\in\C^{(N-r)\times r}, D\in\C^{(N-r)\times(N-r)}.
\end{align}
We also introduce two subalgebras of the algebra  $\C^{N\times N}$
{\nn\begin{align}
  \mathfrak K&:=\Big\{ K=\begin{pmatrix}
  0 & 0   \\
 K_{21}  &  K_{22}
\end{pmatrix},
K_{21}\in{\mathbb C}^{(N-r)\times r}, K_{22}\in{\mathbb
  C}^{(N-r)\times(N-r)}\Big\},\\ \mathfrak L&:=\{ L=\begin{pmatrix}
L_{11} & 0 \\ L_{21} & 0
\end{pmatrix},\;   L_{11}\in{\mathbb C}^{r\times r},
L_{21}\in{\mathbb C}^{(N-r)\times {r}}\Big\},
 \end{align}}
and the related subsets of matrix asymptotic expansions
\begin{align*}
  \mathcal A_{\mathfrak K}&:=\{K\in\mathcal A, K|_{\epsilon=0}\in\mathfrak
  K\},
\label{eq:tipoK}&\mathcal A_{\mathfrak L}&:=\{L\in\mathcal A, L|_{\epsilon=0}\in\mathfrak L\},
\end{align*}
which satisfy several important properties.
\begin{pro}\label{pro:rings}
The following  statements hold.
  \begin{enumerate}
    \item Both $ \mathcal A_{\mathfrak K}$ and $ \mathcal A_{\mathfrak L}$ are
      subrings without identity of the
 ring $\mathcal A$.
    \item For $K\in{\mathcal A}_{\mathfrak K}$ such that
      $\det{K}=O(\epsilon^{r})$, $\epsilon\rightarrow0$, then
 $ K^{-1}\in
      \epsilon^{-1}\mathcal A_{\mathfrak L}$, and reciprocally if
      $L\in\epsilon^{-1} \mathcal A_{\mathfrak L}$ with
      $\det{L}=O(\epsilon^{-r})$, $\epsilon\rightarrow0$, then
      $L^{-1}\in\mathcal
 A_{\mathfrak K}$.
      \item If $K \in\mathcal A_{\mathfrak K}$, that is
        $K=\Big(\begin{smallmatrix}
        0&0\\C_0&D_0\end{smallmatrix}\Big)+\Big(\begin{smallmatrix}
          A_1&B_1\\C_1&D_1\end{smallmatrix}\Big)\epsilon+O(\epsilon^2)$
 then
          {\nn\begin{align}
 \det K&=\epsilon^{r}\det\begin{pmatrix}
 A_{1}
            & B_{1}\\
 C_{0} & D_{0}
\end{pmatrix}+O(\epsilon^{r+1}), & \epsilon&\to 0.
\end{align}}
\item If $L\in\epsilon^{-1}\mathcal A_{\mathfrak L}$, that is
  $L=\Big(\begin{smallmatrix}
  A_0&0\\C_0&0\end{smallmatrix}\Big)\epsilon^{-1}+\Big(\begin{smallmatrix}
    A_1&B_1\\C_1&D_1\end{smallmatrix}\Big)+O(\epsilon)$
 then
    {\nn\begin{align}
 \det L&=\epsilon^{-r}\det\begin{pmatrix}
 A_{0} &
      B_{1}\\
 C_{0} & D_{1}
\end{pmatrix}+O(\epsilon^{-r+1}), & \epsilon&\to 0.
\end{align}}
    \item The subrings $\mathcal A_{\mathfrak K}$ and $\mathcal A_{\mathfrak
      L}$ are right and
 left ideals of $\mathcal A$ respectively, i.e.
      $\mathcal A_{\mathfrak K}\cdot\mathcal A\subset\mathcal
 A_{\mathfrak
      K}$ and $\mathcal A\cdot\mathcal A_{\mathfrak
 L}\subset\mathcal
      A_{\mathfrak L}$.
    \item The following inclusion holds: $\epsilon^{-1}\mathcal A_{\mathfrak L}\cdot\mathcal
      A_{\mathfrak K}\subset\mathcal A$.
  \end{enumerate}
\end{pro}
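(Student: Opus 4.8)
The plan is to verify the six statements of Proposition~\ref{pro:rings} one at a time, exploiting throughout the block decomposition \eqref{Mblocks} adapted to the rank $r$ and the Schur complement formula for the determinant of a block matrix. The unifying idea is that $\mathfrak K$ consists of matrices whose first $r$ rows vanish, while $\mathfrak L$ consists of matrices whose last $N-r$ columns vanish; both are manifestly closed under addition and multiplication (but neither contains $\mathbb I_N$), which gives (1) once one notes that the conditions $K|_{\epsilon=0}\in\mathfrak K$ and $L|_{\epsilon=0}\in\mathfrak L$ are preserved under the arithmetic of $\mathcal A$. For (5), $K\cdot M$ still has vanishing leading first-row block because left-multiplication by anything cannot create nonzero entries where $K$ has a full row of zeros at $\epsilon=0$; dually for $\mathcal A\cdot\mathcal A_{\mathfrak L}$.

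\smallskip

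\noindent For the determinant formulas (3) and (4), which are the computational heart, I would write $K=\bigl(\begin{smallmatrix} A_1\epsilon+O(\epsilon^2) & B_1\epsilon+O(\epsilon^2)\\ C_0+C_1\epsilon+O(\epsilon^2) & D_0+D_1\epsilon+O(\epsilon^2)\end{smallmatrix}\bigr)$ and pull a factor $\epsilon$ out of each of the first $r$ rows, giving $\det K=\epsilon^{r}\det\bigl(\begin{smallmatrix} A_1+O(\epsilon) & B_1+O(\epsilon)\\ C_0+O(\epsilon) & D_0+O(\epsilon)\end{smallmatrix}\bigr)$, and then set $\epsilon=0$ in the remaining determinant to read off the leading coefficient; the $O(\epsilon^{r+1})$ error is just the first-order Taylor remainder of a polynomial in $\epsilon$. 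Statement (4) is the same computation with $\epsilon^{-1}$ factored out of the first $r$ rows, using that the off-diagonal blocks of the $\epsilon^{-1}$ term vanish so that the surviving $B$, $D$ blocks come from the $O(1)$ term. These formulas then feed directly into (2): if $K\in\mathcal A_{\mathfrak K}$ has $\det K=O(\epsilon^{r})$, the leading coefficient $\det\bigl(\begin{smallmatrix}A_1&B_1\\C_0&D_0\end{smallmatrix}\bigr)$ is nonzero, so $K^{-1}$ exists in the field of fractions; writing $K^{-1}$ via the adjugate, every cofactor is $O(\epsilon^{r-1})$ except that the cofactors sitting in the last $N-r$ columns of $K^{-1}$ pick up only $r$ rows of $K$ that each carry a factor $\epsilon$ minus\ldots — more cleanly, one checks directly that $\epsilon K^{-1}$ has its last $N-r$ columns vanishing at $\epsilon=0$, i.e. $\epsilon K^{-1}\in\mathcal A_{\mathfrak L}$, hence $K^{-1}\in\epsilon^{-1}\mathcal A_{\mathfrak L}$; the reciprocal statement for $L$ is obtained by reversing the roles of rows/columns and of $\epsilon\leftrightarrow\epsilon^{-1}$.

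\smallskip

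\noindent Finally (6) follows by combining (2) with the ideal properties: given $L=\epsilon^{-1}L'$ with $L'\in\mathcal A_{\mathfrak L}$ and $K\in\mathcal A_{\mathfrak K}$, the product $L'K$ lies in $\mathcal A_{\mathfrak L}\cdot\mathcal A_{\mathfrak K}$; writing out the block product of a matrix with vanishing last columns against one with vanishing first rows at $\epsilon=0$ shows $L'K|_{\epsilon=0}=0$, so $L'K\in\epsilon\mathcal A$ and therefore $LK=\epsilon^{-1}L'K\in\mathcal A$. The main obstacle I anticipate is purely bookkeeping rather than conceptual: keeping the block sizes $r\times r$, $r\times(N-r)$, $(N-r)\times r$, $(N-r)\times(N-r)$ straight through the Schur-complement expansions and making sure the claimed \emph{leading} coefficients in (3)–(4) are exactly the stated $2\times2$ block determinants and not some permutation thereof — so I would do the $\epsilon$-factoring argument carefully once and reuse it, rather than invoking the adjugate formula, to avoid sign and index errors.
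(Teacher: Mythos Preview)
The paper does not actually prove Proposition~\ref{pro:rings}; it simply states that ``the proof of the previous statements is direct and left to the reader.'' Your proposal supplies exactly the kind of direct verification the authors had in mind, and the arguments are correct in substance: the $\epsilon$-factoring trick for (3)--(4), the adjugate/cofactor reasoning for (2), and the block-product computation $\mathfrak L\cdot\mathfrak K=0$ for (6) are all sound.

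One small slip to fix: in (4) you should factor $\epsilon^{-1}$ out of the first $r$ \emph{columns}, not rows, since the $\epsilon^{-1}$ coefficient $\bigl(\begin{smallmatrix}A_0&0\\C_0&0\end{smallmatrix}\bigr)\in\mathfrak L$ has its last $N-r$ columns (not rows) equal to zero; with that correction your argument yields precisely $\det L=\epsilon^{-r}\det\bigl(\begin{smallmatrix}A_0&B_1\\C_0&D_1\end{smallmatrix}\bigr)+O(\epsilon^{-r+1})$. Also, your proof of (6) does not in fact use (2) --- the direct block computation you give is already complete --- so you can drop that reference.
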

The proof of the previous statements is direct and left to the reader.

 To study the singularity confinement of the matrix equation \eqref{eq:painleveS} when
$\beta_{n}$ satisfies conditions \eqref{eq:condicionB}, we shall use expressions
\eqref{eq:betacero} and \eqref{eq:betauno}, having applied a similarity
transformation to $\beta$ such that $\beta_{m,0}\in\mathfrak K$, $\beta_m\in\mathcal A_{\mathfrak K}$. In other words
\begin{align}
\label{eq:betauno2}
\beta_{m,0}=\begin{pmatrix}
 0 & 0       & \cdots    & 0 & 0 &\cdots &0\\[8pt]
0 & 0 & \cdots & 0 & 0 &\cdots&0\\[8pt]
\vdots & \vdots & & \vdots & \vdots & &\vdots\\[8pt]
\beta_{m,0;r+1,1} & \beta_{m,0;r+1,2} & \cdots& \beta_{m,0;r+1,r+1}&
\beta_{m,0;r+1,r+2} & \cdots &  \beta_{m,0;r+1,N}\\[8pt]
\beta_{m,0;r+2,1} & \beta_{m,0;r+2,2} & \cdots & \beta_{m,0;r+2,r+1} &
\beta_{m,0;r+2,r+2} & \cdots &  \beta_{m,0;r+2,N}\\[8pt] \vdots & \vdots & &
\vdots&\vdots &  &\vdots \\[8pt] \beta_{m,0;N,1} &
\beta_{m,0;N,2} & \cdots& \beta_{m,0;N,r+1} & \beta_{m,0;N,r+2} & \cdots
&\beta_{m,0;N,N} \\[8pt]
\end{pmatrix},
\end{align}
where $m${$\geq$}2, and all the entries that are above the $r$+1-th
row of $\beta_{m}$ are zero. Notice that $\beta_{m-1}$ and $\beta_{m}$ belong to
the rings $\mathcal A$ and $\mathcal A_{\mathfrak K}$, respectively.

\subsection{Proof of the Theorem \ref{main}}
\begin{proof}
As $\beta_{m,0}\in\mathfrak K$, i.e. $\beta_{m}\in\mathcal A_{\mathfrak
 K}$,
and by hypothesis $\det\beta_{m}=O(\epsilon^{r})$, $\epsilon\to 0$,
Proposition \ref{pro:rings}
 implies
\begin{align}
\beta_{m}^{-1}&=(\beta_m^{-1})_{-1} \epsilon^{-1}+(\beta_m^{-1})_{0}+O(\epsilon),& \epsilon&\to 0, &(\beta_m^{-1})_{-1}\in\mathfrak L.
\label{eq:betamenosuno}
\end{align}
 If we replace eqs. \eqref{eq:betauno} and
\eqref{eq:betauno2} into eq. \eqref{eq:painleveS}, and  take into account
eq. \eqref{eq:betacero}, we deduce
\begin{align*}
\beta_{m+1}&=m\beta_{m}^{-1}+O(1),&\epsilon&\rightarrow0.
\end{align*}
By using the relations \eqref{eq:betamenosuno}, \eqref{eq:betacero},
\eqref{eq:betauno} and \eqref{eq:betauno2}, this expression is reduced to
\begin{align}
\beta_{m+1}&=m(\beta_m^{-1})_{-1}\epsilon^{-1}+O(1),& \epsilon\to 0.
\label{eq:sing1}
\end{align}

 \noi Since $(\beta_m^{-1})_{-1}\in\mathfrak L$, from \eqref{eq:sing1} we
conclude
 that $\beta_{m+1}\in\epsilon^{-1}\mathcal A_{\mathfrak L}$, showing
a simple pole singularity. Due to the fact that by hypothesis
eq. \eqref{eq:invert1} holds,
 Proposition \ref{pro:rings} implies
\begin{align}
\beta_{m+1}^{-1}\in\mathcal A_{\mathfrak K}.
\label{eq:betamenosuno2}
\end{align}
Then we deduce
\begin{align*}
\beta_{m+2}&=-m(\beta_m^{-1})_{-1}\epsilon^{-1}+O(1),&\epsilon&\rightarrow0,&\beta_{m+2}&\in\mathcal A_{\mathfrak L}.
\end{align*}
As before, by using condition
\eqref{eq:invert2}, Proposition \ref{pro:rings} gives
\begin{eqnarray}
\nn
\beta_{m+2}^{-1}\in\mathcal A_{\mathfrak K}.
\label{eq:betamenosuno2-}
\end{eqnarray}
Now,
\begin{align}
\beta_{m+3}=\beta_{m}-(m+1)\beta_{m+1}^{-1}+(m+2)\beta_{m+2}^{-1},
\label{eq:painleves}
\end{align}
 where in the r.h.s. we have used twice eq. \eqref{eq:painleveS}
 to write
 $\beta_{m+2}$ as a function of $\beta_{m+1}$ and $\beta_{m}$.  As we have
 proven that $\beta_m,\beta_{m+1}^{-1},\beta_{m+2}^{-1}\in\mathcal A_{\mathfrak K}$,
we deduce that
\begin{align*}
\beta_{m+3}\in\mathcal A_{\mathfrak K}.
\end{align*}
As a consequence of eq. \eqref{eq:invert3} and Proposition \ref{pro:rings}, we obtain
\begin{align}
\beta_{m+3}^{-1}\in\epsilon^{-1}\mathcal A_{\mathfrak L}.
\label{eq:singularidad3}
\end{align}
Our matrix discrete Painlev\'{e} equation \eqref{eq:painleveS}
gives
\begin{align*}
\beta_{m+4}=(m+3)\beta_{m+3}^{-1}-\beta_{m+2}-\beta_{m+3}-\alpha,
\end{align*}
which implies
\begin{align}
\beta_{m+4}&=\beta_{m+3}^{-1}A+O(1),&\epsilon&\rightarrow0,& A&
:=(m+3){\mathbb I}_{N}-\beta_{m+3}\beta_{m+2},
\label{eq:confi}
\end{align}
where we have taken into account that $\beta_{m+3}$ and $\alpha$ are
$O(1)$. We study the matrix $A$, by applying eq. \eqref{eq:painleveS}
once. We get
\begin{align}
 A&={\mathbb I}_{N}+[(m+1)\beta_{m+1}^{-1}-\beta_{m}]\beta_{m+2}\nonumber\\
 &=[(m+1)\beta_{m+1}^{-1}
   -\beta_{m}][(m+1)\beta_{m+1}^{-1}-\beta_{m}-\alpha]-m{\mathbb
   I}_{N}+\beta_{m}\beta_{m+1} \nn
   \\&=[(m+1)\beta_{m+1}^{-1}
     -\beta_{m}][(m+1)\beta_{m+1}^{-1}-\beta_{m}-\alpha]-\beta_{m}(\beta_{m}
   +\beta_{m-1}+\alpha).
\label{eq:decisivo}
\end{align}
Now, recalling that $\beta_{m-1}=O(1)$, $\beta_m,
\beta_{m+1}^{-1}\in\mathcal A_{\mathfrak K}$, and by virtue of Proposition
\ref{pro:rings}
 we conclude that
\begin{align}\label{eq:Ak}
  A\in\mathcal A_{\mathfrak K}.
\end{align}
Finally, from eqs. \eqref{eq:singularidad3}, \eqref{eq:confi} and \eqref{eq:Ak} we
deduce that
\begin{align*}
  \beta_{m+4}\in\mathcal A.
\end{align*}
By taking into account that $\det\beta_{m+4}=O(1)$, we have proven that the
singularity has disappeared. Thus, the singularity confinement is ensured
with a confinement time $l=4$.
\end{proof}


 In order to show the genericness of conditions
\eqref{eq:invert1}-\eqref{eq:invert4} we shall perform an asymptotic analysis, by introducing the expansions
\begin{align*}
\beta_{m-1}&=\sum_{i=0}^{\infty}\begin{pmatrix}
A_{m-1,i} &
B_{m-1,i} \\[8pt]
C_{m-1,i} &  D_{m-1,i}
\end{pmatrix}\epsilon^{i},
\\
\beta_{m}&=\begin{pmatrix}
0 & 0  \\[8pt]
C_{m,0} & D_{m,0}
\end{pmatrix}+
\sum_{i=1}^{\infty}\begin{pmatrix}
A_{m,i} & B_{m,i}  \\[8pt]
C_{m,i} &  D_{m,i}
\end{pmatrix}\epsilon^{i},
\end{align*}
 whereas $\alpha$ is written simply as
\begin{align*}
\alpha=\begin{pmatrix}
\alpha_{11} & \alpha_{12}  \\[8pt]
\alpha_{21} & \alpha_{22}
\end{pmatrix}.
\end{align*}
We introduce
\begin{definition}
  The following matrices will be useful
  \begin{align*}
    Z_{1}&:=D_{m+1,0}+D_{m,0}^{-1}C_{m,0}B_{m+1,0},\\
    Z_{2}&:=D_{m+2,0}+D_{m,0}^{-1}C_{m,0}B_{m+2,0},\\
    Z_{3}&:=D_{m+3,0}.
  \end{align*}
\end{definition}

The genericness  of the singularity confinement is described by

\begin{theorem}\label{generico}\begin{enumerate}
\item If $\det D_{m,0}\neq 0$, for $\epsilon\to 0$ we have
\begin{align*}
  \det\beta_{m+1}=O(\epsilon^{-r})\Leftrightarrow \det (Z_1)\neq 0.
\end{align*}
\item If ${\det}D_{m,0}\neq0$ , ${\det}Z_{1}\neq0$, we have that for
$\epsilon\to 0$
\begin{align*}
  \det\beta_{m+2}=O(\epsilon^{-r})\Leftrightarrow \det (Z_2)\neq 0.
  \end{align*}
  \item If $\det{D_{m,0}}{\neq}0$, ${\det}Z_{1}{\neq}0$ and
${\det}Z_{2}{\neq}0$, we have that for $\epsilon\to 0$
\begin{align*}
  \det\beta_{m+3}=O(\epsilon^{r})\Leftrightarrow {\det}Z_3\neq 0.
\end{align*}
\item If $\det D_{m,0}\neq 0$, $\det Z_1\neq 0$, $\det Z_2\neq 0$ and $\det Z_3\neq
0$ we have that
\begin{align*}
\det \beta_{m+4}&=O(1), &\epsilon&\to 0,
\end{align*}
generically.
\end{enumerate}
\end{theorem}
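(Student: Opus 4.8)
The plan is to convert each of the four order statements into the non-vanishing of a single explicit determinant in the leading coefficients of $\beta_{m-1},\beta_m,\alpha$, using only Proposition~\ref{pro:rings}, the recursion \eqref{eq:painleveS} together with its iterate \eqref{eq:painleves}, and Schur complements; items (1)--(3) will come out as genuine equivalences and the word \emph{generically} will be forced on us only in (4). As a preliminary one writes $\beta_m=\Big(\begin{smallmatrix}0&0\\ C_{m,0}&D_{m,0}\end{smallmatrix}\Big)+\Big(\begin{smallmatrix}A_{m,1}&B_{m,1}\\ C_{m,1}&D_{m,1}\end{smallmatrix}\Big)\epsilon+O(\epsilon^2)$ and sets $\widetilde S:=A_{m,1}-B_{m,1}D_{m,0}^{-1}C_{m,0}$. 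If $\det D_{m,0}\neq0$, a Schur complement inside Proposition~\ref{pro:rings}(3) gives $\det\beta_m=\epsilon^{r}\det D_{m,0}\det\widetilde S+O(\epsilon^{r+1})$, so the standing hypothesis $\det\beta_m=O(\epsilon^{r})$ (and not higher order) forces $\det\widetilde S\neq0$. A routine Schur computation of $\beta_m^{-1}$, taking the Schur complement of its bottom-right block (its membership in $\epsilon^{-1}\mathcal A_{\mathfrak L}$ being Proposition~\ref{pro:rings}(2)), then yields the pole coefficient $(\beta_m^{-1})_{-1}=\Big(\begin{smallmatrix}\widetilde S^{-1}&0\\ -D_{m,0}^{-1}C_{m,0}\widetilde S^{-1}&0\end{smallmatrix}\Big)$.

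\emph{Parts (1) and (2).} From \eqref{eq:painleveS} one has $\beta_{m+1}=m(\beta_m^{-1})_{-1}\epsilon^{-1}+\beta_{m+1,0}+O(\epsilon)\in\epsilon^{-1}\mathcal A_{\mathfrak L}$, and, once $\det Z_1\neq0$ makes $\beta_{m+1}^{-1}\in\mathcal A_{\mathfrak K}$ by Proposition~\ref{pro:rings}(2), also $\beta_{m+2}=-m(\beta_m^{-1})_{-1}\epsilon^{-1}+\beta_{m+2,0}+O(\epsilon)\in\epsilon^{-1}\mathcal A_{\mathfrak L}$. Writing the $O(1)$ parts in blocks $\Big(\begin{smallmatrix}A_{m+j,0}&B_{m+j,0}\\ C_{m+j,0}&D_{m+j,0}\end{smallmatrix}\Big)$, Proposition~\ref{pro:rings}(4) gives $\det\beta_{m+j}=\epsilon^{-r}\det\Big(\begin{smallmatrix}\pm m\widetilde S^{-1}&B_{m+j,0}\\ \mp mD_{m,0}^{-1}C_{m,0}\widetilde S^{-1}&D_{m+j,0}\end{smallmatrix}\Big)+O(\epsilon^{-r+1})$, with sign $+$ for $j=1$ and $-$ for $j=2$. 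Taking the Schur complement with respect to the invertible corner $\pm m\widetilde S^{-1}$ collapses the determinant to $(\pm m)^{r}(\det\widetilde S)^{-1}\det\big(D_{m+j,0}+D_{m,0}^{-1}C_{m,0}B_{m+j,0}\big)=(\pm m)^{r}(\det\widetilde S)^{-1}\det Z_j$, which is exactly statements (1) and (2). Note that $Z_1,Z_2$ are, by their definition, read off directly from the $O(1)$ coefficients of $\beta_{m+1},\beta_{m+2}$, so no further expansion is needed here.

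\emph{Part (3).} With $\det Z_1,\det Z_2\neq0$ one has $\beta_{m+1}^{-1},\beta_{m+2}^{-1}\in\mathcal A_{\mathfrak K}$, and inverting the two block expansions yields their $O(1)$ parts $\Big(\begin{smallmatrix}0&0\\ Z_j^{-1}D_{m,0}^{-1}C_{m,0}&Z_j^{-1}\end{smallmatrix}\Big)$, $j=1,2$. Substituting into \eqref{eq:painleves} one reads off $\beta_{m+3}\in\mathcal A_{\mathfrak K}$ with $D_{m+3,0}=Z_3$ and the decisive identity $C_{m+3,0}=Z_3D_{m,0}^{-1}C_{m,0}$. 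By Proposition~\ref{pro:rings}(3), $\det\beta_{m+3}=\epsilon^{r}\det\Big(\begin{smallmatrix}A_{m+3,1}&B_{m+3,1}\\ C_{m+3,0}&D_{m+3,0}\end{smallmatrix}\Big)+O(\epsilon^{r+1})$; factoring $\Big(\begin{smallmatrix}I_r&0\\ 0&Z_3\end{smallmatrix}\Big)$ off the left and then a one-line Schur complement turns the displayed determinant into $\det Z_3\cdot\det\big(A_{m+3,1}-B_{m+3,1}D_{m,0}^{-1}C_{m,0}\big)$. Now the assignment $X\mapsto[\text{top-left }\epsilon\text{-coefficient of }X]-[\text{top-right }\epsilon\text{-coefficient of }X]\,D_{m,0}^{-1}C_{m,0}$ is linear, and a short computation (the correction terms telescope) evaluates it to $\widetilde S,\ m^{-1}\widetilde S,\ -m^{-1}\widetilde S$ on $\beta_m,\beta_{m+1}^{-1},\beta_{m+2}^{-1}$; hence on $\beta_{m+3}=\beta_m-(m+1)\beta_{m+1}^{-1}+(m+2)\beta_{m+2}^{-1}$ it equals $\widetilde S-\tfrac{m+1}{m}\widetilde S-\tfrac{m+2}{m}\widetilde S=-\tfrac{m+3}{m}\widetilde S$, which is invertible since $\det\widetilde S\neq0$. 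Thus $\det\beta_{m+3}=\epsilon^{r}\big(-\tfrac{m+3}{m}\big)^{r}\det\widetilde S\,\det Z_3+O(\epsilon^{r+1})$ and (3) follows.

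\emph{Part (4) and the main obstacle.} Under all four conditions Proposition~\ref{pro:rings}(2) gives $\beta_{m+3}^{-1}\in\epsilon^{-1}\mathcal A_{\mathfrak L}$ with pole coefficient $-\tfrac{m}{m+3}(\beta_m^{-1})_{-1}$; since $\beta_{m+2}=-m(\beta_m^{-1})_{-1}\epsilon^{-1}+O(1)$, the $\epsilon^{-1}$ terms in $\beta_{m+4}=(m+3)\beta_{m+3}^{-1}-\beta_{m+2}-\beta_{m+3}-\alpha$ cancel, so $\beta_{m+4}\in\mathcal A$ and the assertion ``$\det\beta_{m+4}=O(1)$'' is precisely ``$\det\beta_{m+4,0}\neq0$'', where $\beta_{m+4,0}=(m+3)(\beta_{m+3}^{-1})_{0}-\beta_{m+2,0}-\beta_{m+3,0}-\alpha$. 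The remaining task, which is where the real work sits, is to produce $\beta_{m+4,0}$ explicitly as a rational expression in the data $\{\beta_{m-1,i}\},\{\beta_{m,i}\},\alpha$, by carrying the expansions of parts (1)--(3) one order further (in particular computing the $O(1)$ part of $(m+3)\beta_{m+3}^{-1}$ without losing the cancellation of the simple pole), and then to check that the polynomial $\det\beta_{m+4,0}$, after clearing the denominators $\det D_{m,0},\det\widetilde S,\det Z_1,\det Z_2,\det Z_3$ that are nonzero on the locus in question, is not identically zero; it suffices for this to exhibit one admissible configuration where $\beta_{m+4,0}$ is invertible, e.g. the triangular-up-to-similarity data for which confinement is already established in \cite{CM}. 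Then $\{\det\beta_{m+4,0}=0\}$ is cut out by a nontrivial polynomial, hence is a proper algebraic subvariety, which is exactly the genericness claimed in (4). The only conceptually delicate points are the Schur-complement collapses of parts (1)--(2) and, in part (3), the factorisation via $C_{m+3,0}=Z_3D_{m,0}^{-1}C_{m,0}$ together with the linear-functional identity producing $-\tfrac{m+3}{m}\widetilde S$; the heavy bookkeeping through three successive block inversions, and above all the explicit form of $\beta_{m+4,0}$ and the verification that its determinant does not vanish identically, is the ``much more effort'' alluded to in the Introduction and is the main obstacle.
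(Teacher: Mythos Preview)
Your treatment of parts (1)--(3) is correct and coincides with the paper's own argument: the paper likewise expands $\beta_m^{-1}$ via the Schur complement $S_D(\beta_m)_1=\widetilde S$, applies Proposition~\ref{pro:rings}(4) to $\beta_{m+1},\beta_{m+2}\in\epsilon^{-1}\mathcal A_{\mathfrak L}$ and Proposition~\ref{pro:rings}(3) to $\beta_{m+3}\in\mathcal A_{\mathfrak K}$ built from \eqref{eq:painleves}, and collapses each leading determinant by a Schur complement to isolate $\det Z_j$. Your identification $C_{m+3,0}=Z_3D_{m,0}^{-1}C_{m,0}$ and the linear-functional computation giving $A_{m+3,1}-B_{m+3,1}D_{m,0}^{-1}C_{m,0}=-\tfrac{m+3}{m}\widetilde S$ are exactly what the paper does, so on (1)--(3) there is no substantive difference.

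For part (4) you and the paper diverge. You propose to push the block expansions one order further, write $\det\beta_{m+4,0}$ as a rational expression in the data, and then certify non-identical vanishing by specialising to the triangular configurations of \cite{CM}. The paper instead avoids any explicit computation of $\beta_{m+4,0}$: having already shown $\beta_{m+4}\in\mathcal A$ (via the matrix $A$ of Theorem~\ref{main}, equivalent to your pole-cancellation $(\beta_{m+3}^{-1})_{-1}=-\tfrac{m}{m+3}(\beta_m^{-1})_{-1}$), it invokes the \emph{time-reversal symmetry} $\beta_{n-1}=n\beta_n^{-1}-\beta_{n+1}-\beta_n-\alpha$ and a parameter count. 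Taking $(\beta_{m+4},\beta_{m+3})$ as initial data for the backward recursion, which has the same structure as the forward one, the condition $\det\beta_{m+4,0}=0$ drops the dimension of admissible $\beta_{m+4,0}$ from $N^2$ to $N^2-1$; running backwards then forces $\beta_{m-1,0}$ to live in a space of dimension at most $N^2-1$, contradicting its assumed genericity. This buys the paper a clean, computation-free genericness argument, whereas your route is more constructive but requires either the heavy bookkeeping you flag, or a careful check that the triangular data from \cite{CM} actually lie in the open set $\{\det D_{m,0}\neq0,\ \det Z_1\neq0,\ \det Z_2\neq0,\ \det Z_3\neq0\}$ for the rank $r$ under consideration. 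Both strategies are valid; the paper's is shorter, yours would in principle yield an explicit obstruction locus.
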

\begin{proof}
  See Appendix \ref{proofs}
\end{proof}

The matrices $Z_1,Z_2$ and $Z_3$ can be expressed in terms of initial conditions as follows
\begin{pro}
  The following expressions in terms of initial conditions hold
  {\small\begin{align*}
  Z_1&=m D_{m,0}^{-1}
-D_{m-1,0}-D_{m,0}-\alpha_{22}-D_{m,0}^{-1}C_{m,0}(B_{m-1,0}+\alpha_{12}),\\
    Z_2&=(m+1)({m}D_{m,0}^{-1}-D_{m,0}^{-1}C_{m,0}(B_{m-1,0}+\alpha_{12})-
D_{m-1,0}-D_{m,0}-\alpha_{22})^{-1}+D_{m,0}^{-1}C_{m,0}B_{m-1,0}
-{m}D_{m,0}^{-1}+D_{m-1,0},\\
    Z_3&=D_{m,0}-(m+1)Z_1^{-1}+(m+2)Z_2^{-1}.
  \end{align*}}
\end{pro}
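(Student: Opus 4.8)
The plan is to extract, order by order in $\epsilon$, the $O(1)$ blocks $B_{m+k,0}$ and $D_{m+k,0}$ of the iterates $\beta_{m+k}$ for $k=1,2,3$ by feeding the block expansions of $\beta_{m-1}$, $\beta_m$ and $\alpha$ into \eqref{eq:painleveS}, and then to assemble them into the matrices $Z_1,Z_2,Z_3$ of the preceding definition. The running genericity assumptions enter cumulatively exactly as in Theorem \ref{generico}: $\det D_{m,0}\neq0$ is needed for $Z_1$, additionally $\det Z_1\neq0$ for $Z_2$, and additionally $\det Z_2\neq0$ for $Z_3$. Everything rests on two elementary facts read off from $\beta\,\beta^{-1}=\mathbb I_N$ in block form.

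First I would record those facts. By Proposition \ref{pro:rings}, $\beta_m^{-1}=(\beta_m^{-1})_{-1}\epsilon^{-1}+(\beta_m^{-1})_0+O(\epsilon)$ with $(\beta_m^{-1})_{-1}\in\mathfrak L$; let $F_0$ and $H_0$ denote the $(1,2)$ and $(2,2)$ blocks of $(\beta_m^{-1})_0$. Matching the $(2,2)$ block of $\beta_m\beta_m^{-1}=\mathbb I_N$ at order $\epsilon^0$ gives $C_{m,0}F_0+D_{m,0}H_0=\mathbb I_{N-r}$, i.e., since $\det D_{m,0}\neq0$, the identity $H_0+D_{m,0}^{-1}C_{m,0}F_0=D_{m,0}^{-1}$. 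Matching the $(2,1)$ block at order $\epsilon^{-1}$ gives $C_{m,0}(\text{top-left block of }(\beta_m^{-1})_{-1})+D_{m,0}(\text{bottom-left block of }(\beta_m^{-1})_{-1})=0$, so these two blocks are tied by (bottom-left)$=-D_{m,0}^{-1}C_{m,0}\cdot$(top-left). Since $\beta_{m+1}=m(\beta_m^{-1})_{-1}\epsilon^{-1}+O(1)$ and $\beta_{m+2}=-m(\beta_m^{-1})_{-1}\epsilon^{-1}+O(1)$, the $\epsilon^{-1}$-coefficients of $\beta_{m+1}$ and of $\beta_{m+2}$ equal $\pm m(\beta_m^{-1})_{-1}$, hence each of them obeys the very same relation (bottom-left block)$=-D_{m,0}^{-1}C_{m,0}\cdot$(top-left block).

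Now the three formulas. For $Z_1$: the $\epsilon^0$ part of $\beta_{m+1}=m\beta_m^{-1}-\beta_{m-1}-\beta_m-\alpha$ gives $B_{m+1,0}=mF_0-B_{m-1,0}-\alpha_{12}$ and $D_{m+1,0}=mH_0-D_{m-1,0}-D_{m,0}-\alpha_{22}$ (the $(1,2)$ block of $\beta_{m,0}$ being zero); inserting these into $Z_1=D_{m+1,0}+D_{m,0}^{-1}C_{m,0}B_{m+1,0}$ and using $H_0+D_{m,0}^{-1}C_{m,0}F_0=D_{m,0}^{-1}$ to collapse the $F_0,H_0$ terms into $mD_{m,0}^{-1}$ produces the stated $Z_1$. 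For $Z_2$: since $\beta_{m+1}^{-1}\in\mathcal A_{\mathfrak K}$, its $\epsilon^0$ $(1,2)$-block is zero, whence $B_{m+2,0}=-B_{m+1,0}-\alpha_{12}$; its $\epsilon^0$ $(2,2)$-block $\widehat H_0$ is found from $\beta_{m+1}\beta_{m+1}^{-1}=\mathbb I_N$ by using the $\epsilon^0$ $(1,2)$-block equation to eliminate, via the block relation above, the order-$\epsilon$ part of the $(1,2)$ block of $\beta_{m+1}^{-1}$ from the $\epsilon^0$ $(2,2)$-block equation; this reduces to $Z_1\widehat H_0=\mathbb I_{N-r}$, so $\widehat H_0=Z_1^{-1}$. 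Feeding $\widehat H_0$, $B_{m+1,0}$ and $D_{m+1,0}$ into the $\epsilon^0$ $(2,2)$-block of $\beta_{m+2}=(m+1)\beta_{m+1}^{-1}-\beta_m-\beta_{m+1}-\alpha$ and then into $Z_2=D_{m+2,0}+D_{m,0}^{-1}C_{m,0}B_{m+2,0}$ gives $Z_2=(m+1)Z_1^{-1}-Z_1-D_{m,0}-\alpha_{22}-D_{m,0}^{-1}C_{m,0}\alpha_{12}$; replacing that occurrence of $Z_1$ by the formula just obtained and cancelling yields the claimed expression. For $Z_3$: the verbatim analogue of the $\widehat H_0$ computation applied to $\beta_{m+2}^{-1}\in\mathcal A_{\mathfrak K}$ (legitimate because the $\epsilon^{-1}$-coefficient of $\beta_{m+2}$ satisfies the same block relation, and now $\det Z_2\neq0$) gives that the $(2,2)$-block of $(\beta_{m+2}^{-1})_0$ equals $Z_2^{-1}$; the $\epsilon^0$ $(2,2)$-block of \eqref{eq:painleves} then reads $D_{m+3,0}=D_{m,0}-(m+1)Z_1^{-1}+(m+2)Z_2^{-1}$, which is $Z_3$ by definition.

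The argument is in the end pure bookkeeping; the single place that needs attention — and the main, if mild, obstacle — is the repeated extraction of the $(2,2)$-block of $(\beta_{m+k}^{-1})_0$: because $\beta_{m+1}$ and $\beta_{m+2}$ genuinely carry a simple pole, one may not identify $(\beta^{-1})_0$ with the inverse of $\beta_0$, and must instead run the two-step block elimination described above, taking care that the matrix inverse appearing in $Z_1$, $Z_2$ and $Z_3$ is justified at the successive stages by $\det D_{m,0}\neq0$, then $\det Z_1\neq0$, then $\det Z_2\neq0$.
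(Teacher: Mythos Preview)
Your argument is correct and follows essentially the same route as the paper: the formulas for $Z_1,Z_2,Z_3$ there emerge as a byproduct of Theorem~\ref{generico}, where in Appendix~\ref{proofs} the blocks $B_{m+k,0}$, $D_{m+k,0}$ and $(\beta_{m+k}^{-1})_{22,0}$ are computed in turn and assembled exactly as you do. The one cosmetic difference is that the paper reaches the needed blocks of $\beta_m^{-1}$, $\beta_{m+1}^{-1}$, $\beta_{m+2}^{-1}$ through the explicit Schur-complement inversion formulas of Appendix~A (yielding, e.g., $(\beta_m^{-1})_{12,0}=-S_D(\beta_m)_1^{-1}B_{m,1}D_{m,0}^{-1}$ and $(\beta_m^{-1})_{22,0}=D_{m,0}^{-1}(\mathbb I_{N-r}+C_{m,0}S_D(\beta_m)_1^{-1}B_{m,1}D_{m,0}^{-1})$), whereas you bypass those formulas entirely by matching blocks of $\beta\,\beta^{-1}=\mathbb I_N$ to obtain the single combination $H_0+D_{m,0}^{-1}C_{m,0}F_0=D_{m,0}^{-1}$ (and its analogues $\widehat H_0=Z_1^{-1}$, $(\beta_{m+2}^{-1})_{22,0}=Z_2^{-1}$) that is actually needed; this is a mild but genuine economy, since it never requires the invertibility of $S_D(\beta_m)_1$ separately and computes only what enters the final answer.
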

\begin{proof}
  Is a byproduct of the proof of Theorem \ref{generico}. 
\end{proof}

\appendix
\section{Schur complements}
To show the genericness of the confinement phenomenon in the non Abelian scenario it is very convenient to introduce Schur complements.
\begin{definition}
Given $M$ in block form as in \eqref{Mblocks}, the Schur complements with
respect to $D$ (if $\det D\neq 0$), and to $A$ (if $\det A\neq 0$) are defined
to be
\begin{align*}
  S_{D}(M)&:=A-BD^{-1}C,&
S_{A}(M)&:=D-CA^{-1}B,
\end{align*}
respectively.
\end{definition}
In terms of the Schur complements we have the following well known expressions
for the inverse matrices
{\small\begin{align}
M^{-1}&=
\begin{cases}
\begin{pmatrix}
S_{D}(M)^{-1} &-S_{D}(M)^{-1}BD^{-1}\\[8pt]
-D^{-1}CS_{D}(M)^{-1} & D^{-1}({\mathbb I}_{N-r}+
CS_{D}(M)^{-1}BD^{-1})
\end{pmatrix},
 &\text{for } \det D,  \det S_D(M)\neq 0,\\[20pt]
\begin{pmatrix}
A^{-1}+A^{-1}BS_{A}(M)^{-1}CA^{-1}
 &-A^{-1}BS_{A}(M)^{-1}\\[8pt]
-S_{A}(M)^{-1}CA^{-1} & S_{A}(M)^{-1}
\end{pmatrix},
&\text{for } \det A,  \det S_A(M)\neq 0,\\[20pt]
\begin{pmatrix}
S_{D}(M)^{-1}&-S_{D}(M)^{-1}BD^{-1}\\[20pt]
-D^{-1}CS_{D}(M)^{-1}  & S_{A}(M)^{-1}
\end{pmatrix},
&\text{for }\det A,  \det D,  \det S_D(M), \det S_A(M)\neq 0,
\end{cases}
\label{eq:inversablokes}
\end{align}}
and for the determinant of $M$
\begin{align}
\nn\det M&=\det{A}\hspace{1mm}  {\det}S_{A}(M)\\
&=\det{D} \hspace{1mm}  {\det}S_{D}(M).
\label{eq:detblokesD}
\end{align}
Now, if $K=\Big(\begin{smallmatrix}
A&B\\C&D
\end{smallmatrix}\Big)=\Big(\begin{smallmatrix}
0&0\\C_0&D_0
\end{smallmatrix}\Big)+\Big(\begin{smallmatrix}
A_1&B_1\\C_1&D_1
\end{smallmatrix}\Big)\epsilon+O(\epsilon^2)\in\mathcal A_{\mathfrak K}$ then we can write the Schur complements in the form
\begin{align}
 S_{D}(K)&=A-BD^{-1}C=:S_{D}(K)_1\epsilon+S_{D}(K)_2\epsilon^{2}
 +O(\epsilon^3),& \epsilon&\to0,\label{eq:schur1}\\
\nn{}S_{A}(K)&=D-CA^{-1}B=:S_{A}(K)_0+S_{A}(K)_1\epsilon+O(\epsilon^2),&
\epsilon&\to0,
\end{align}
 where
\begin{align*}
 S_{D}(K)_1=&A_{1}-B_{1}D_{0}^{-1}C_{0},\\
 S_{D}(K)_2=&A_{2}-B_{1}D_{0}^{-1}C_{1}-  B_{2}D_{0}^{-1}C_{0}+B_{1}D_{0}^{-1}D_{1}D_{0}^{-1}C_{0},\\
 S_{D}(K)_3=&A_{3}-B_{1}D_{0}^{-1}C_{2}+(B_{1}D_{0}^{-1}D_{1}D_{0}^{-1}- B_{2}D_{0}^{-1})C_{1} +
B_{1}(D_{0}^{-1}D_{2}D_{0}^{-1}-D_{0}^{-1}D_{1}D_{0}^{-1}
D_{1}D_{0}^{-1})C_{0} \\ \nn &+ B_2D_{0}^{-1}D_{1}D_{0}^{-1}C_{0}
-B_{3}D_{0}^{-1}C_{0},\\
S_{D}(K)_4=&A_{4}-B_{1}D_{0}^{-1}C_{3}+B_{1}D_{0}^{-1}D_{1}D_{0}^{-1}C_{2}
-B_{1}D_{0}^{-1}(D_{1}D_{0}^{-1}D_{1}D_{0}^{-1}-
D_{2}D_{0}^{-1})C_{1}-B_{1}D_{0}^{-1}D_{2}D_{0}^{-1}D_{1}D_{0}^{-1}C_{0}
\\ \nn &+B_{1}D_{0}^{-1}D_{1}(D_{0}^{-1}D_{1}D_{0}^{-1}D_{1}D_{0}^{-1}-
D_{0}^{-1}D_{2}D_{0}^{-1})C_{0}+B_{1}D_{0}^{-1}D_{3}D_{0}^{-1}C_{0}
-B_{2}D_{0}^{-1}C_{2}\\ \nn &+B_{2}D_{0}^{-1}D_{1}D_{0}^{-1}C_{1}-
B_{2}D_{0}^{-1}(D_{1}D_{0}^{-1}D_{1}D_{0}^{-1}-D_{2}D_{0}^{-1})C_{0}
-B_{3}D_{0}^{-1}(C_{1}-D_{1}D_{0}^{-1}C_{0})-B_{4}D_{0}^{-1}C_{0},\\
S_{A}(K)_0=&D_{0}-C_{0}A_{1}^{-1}B_{1},\nonumber\\
 S_{A}(K)_1=&D_{1}-C_{0}A_{1}^{-1}B_{2}-C_{1}
A_{1}^{-1}B_{1}+C_{0}A_{1}^{-1}A_{2}A_{1}^{-1}B_{1}.
\end{align*}
For the determinant $\det M$ we just take into account eqs. \eqref{eq:detblokesD} and \eqref{eq:schur1} to get
\begin{align*}
\det K=&\epsilon^{r}\det(A_{1}-B_{1}D_{0}^{-1}C_{0}
+O(\epsilon))\det(D_{0}+O(\epsilon))\\
=&\det(A_{1}-B_{1}D_{0}^{-1}C_{0})\det(D_{0})\epsilon^{r}+O(\epsilon^{r+1}).
\end{align*}

\section{Proof of Theorem \ref{generico}}\label{proofs}
\begin{lemma}\label{pro:beta1}
 \begin{enumerate}
\item Assuming that  $\det D_{m,0}\neq 0$  the following asymptotic holds
\begin{align*}
\det\beta_{m+1}=&\epsilon^{-r}\begin{vmatrix}
  mS_D(\beta_m)_1^{-1}&
-mS_D(\beta_m)^{-1}_1B_{m,1}D_{m,0}^{-1}-B_{m-1,0}-\alpha_{12}
  \\
  -mD_{m,0}^{-1}C_{m,0}S_D(\beta_m)_1^{-1}&{m}D_{m,0}^{-1}
+{m}D_{m,0}^{-1}C_{m,0}S_{D}(\beta_m)_{1}^{-1}B_{m,1}D_{m,0}^{-1}
-D_{m-1,0}-D_{m,0}-\alpha_{22}
\end{vmatrix}\\&+O(\epsilon^{-r+1})
\end{align*}
for $\epsilon\to0$, where $S_{D}(\beta_m)_1:=A_{m,1}-B_{m,1}D_{m,0}^{-1}C_{m,0}\in\C^{r\times r}$.
\end{enumerate}
 \end{lemma}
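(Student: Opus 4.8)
The plan is to insert the recursion \eqref{eq:painleveS} in the form $\beta_{m+1}=m\beta_m^{-1}-\beta_{m-1}-\beta_m-\alpha$, to expand $\beta_m^{-1}$ blockwise by means of the Schur complement formula \eqref{eq:inversablokes}, to recognise the result as an element of $\epsilon^{-1}\mathcal A_{\mathfrak L}$, and to finish by applying part (4) of Proposition \ref{pro:rings}.

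First I would set up the block expansion of $\beta_m\in\mathcal A_{\mathfrak K}$: here $A=A_{m,1}\epsilon+O(\epsilon^2)$, $B=B_{m,1}\epsilon+O(\epsilon^2)$, $C=C_{m,0}+O(\epsilon)$, $D=D_{m,0}+O(\epsilon)$. Since $\det D_{m,0}\neq0$, the block $D$ is invertible in $\mathcal A$ and, as in \eqref{eq:schur1}, $S_D(\beta_m)=A-BD^{-1}C=S_D(\beta_m)_1\epsilon+O(\epsilon^2)$ with $S_D(\beta_m)_1=A_{m,1}-B_{m,1}D_{m,0}^{-1}C_{m,0}$. Because the zero of $\det\beta_m$ at $\epsilon=0$ is of exact order $r$ and $\det\beta_m=\det D\cdot\det S_D(\beta_m)$ by \eqref{eq:detblokesD}, we get $\det S_D(\beta_m)_1\neq0$, hence $S_D(\beta_m)^{-1}=\epsilon^{-1}S_D(\beta_m)_1^{-1}+O(1)$. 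Substituting into the first line of \eqref{eq:inversablokes} and keeping only the needed orders, the $(1,1)$ and $(2,1)$ blocks of $m\beta_m^{-1}$ start at order $\epsilon^{-1}$ with coefficients $mS_D(\beta_m)_1^{-1}$ and $-mD_{m,0}^{-1}C_{m,0}S_D(\beta_m)_1^{-1}$, whereas the $(1,2)$ and $(2,2)$ blocks are $O(1)$ with constant terms $-mS_D(\beta_m)_1^{-1}B_{m,1}D_{m,0}^{-1}$ and $mD_{m,0}^{-1}+mD_{m,0}^{-1}C_{m,0}S_D(\beta_m)_1^{-1}B_{m,1}D_{m,0}^{-1}$, these last two being finite precisely because the $\epsilon^{-1}$ part of $S_D(\beta_m)^{-1}$ meets the $\epsilon^{1}$ part of $B$.

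Next I would add the $O(1)$ term $-\beta_{m-1}-\beta_m-\alpha$, which modifies only the constant parts of the four blocks; recalling that the $(1,2)$ and $(2,2)$ blocks of $\beta_{m,0}$ are $0$ and $D_{m,0}$, the constant parts of the $(1,2)$ and $(2,2)$ blocks of $\beta_{m+1}$ acquire the extra summands $-B_{m-1,0}-\alpha_{12}$ and $-D_{m-1,0}-D_{m,0}-\alpha_{22}$. This exhibits $\beta_{m+1}$ in the shape required by Proposition \ref{pro:rings}(4), namely $\beta_{m+1}=(\begin{smallmatrix}A_0&0\\C_0&0\end{smallmatrix})\epsilon^{-1}+(\begin{smallmatrix}A_1&B_1\\C_1&D_1\end{smallmatrix})+O(\epsilon)$ with $A_0=mS_D(\beta_m)_1^{-1}$, $C_0=-mD_{m,0}^{-1}C_{m,0}S_D(\beta_m)_1^{-1}$, $B_1=-mS_D(\beta_m)_1^{-1}B_{m,1}D_{m,0}^{-1}-B_{m-1,0}-\alpha_{12}$ and $D_1=mD_{m,0}^{-1}+mD_{m,0}^{-1}C_{m,0}S_D(\beta_m)_1^{-1}B_{m,1}D_{m,0}^{-1}-D_{m-1,0}-D_{m,0}-\alpha_{22}$. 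Proposition \ref{pro:rings}(4) then gives $\det\beta_{m+1}=\epsilon^{-r}\det(\begin{smallmatrix}A_0&B_1\\C_0&D_1\end{smallmatrix})+O(\epsilon^{-r+1})$, which is exactly the claimed identity.

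The only delicate point is the order bookkeeping in the blockwise inversion: one has to verify that no cancellation drops the $(1,1)$ and $(2,1)$ blocks below order $\epsilon^{-1}$ — this is precisely what $\det S_D(\beta_m)_1\neq0$ secures — and that the constant terms of the $(1,2)$ and $(2,2)$ blocks are extracted correctly, since they come from an $\epsilon^{-1}\cdot\epsilon$ product and hence depend only on the leading data $S_D(\beta_m)_1$, $B_{m,1}$, $C_{m,0}$, $D_{m,0}$. Beyond this there is no genuine obstacle: the whole argument is a controlled Laurent expansion followed by a single application of Proposition \ref{pro:rings}.
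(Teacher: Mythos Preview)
Your proposal is correct and follows essentially the same route as the paper: invert $\beta_m$ blockwise via the Schur complement formula, feed the resulting Laurent expansion into the recursion $\beta_{m+1}=m\beta_m^{-1}-\beta_{m-1}-\beta_m-\alpha$, and read off the leading determinant via Proposition~\ref{pro:rings}(4). The only cosmetic difference is that the paper justifies the invertibility of $S_D(\beta_m)_1$ by citing $\det\Big(\begin{smallmatrix}A_{m,1}&B_{m,1}\\C_{m,0}&D_{m,0}\end{smallmatrix}\Big)\neq0$ directly, whereas you deduce it from the factorisation $\det\beta_m=\det D\cdot\det S_D(\beta_m)$ and the exact order $r$ of the zero---these are equivalent.
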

 \begin{proof}
From eq. \eqref{eq:betauno} we know that
\begin{align*}
\det\begin{pmatrix}
A_{m,1} & B_{m,1}\\
C_{m,0} & D_{m,0}
\end{pmatrix}\neq0,
\end{align*}
hence $S_D(\beta_m)_1$ is invertible.
 Then, from \eqref{eq:inversablokes}
and \eqref{eq:schur1} we deduce
\begin{align*}
\beta_{m}^{-1}=\begin{pmatrix}
(\beta_{m}^{-1})_{11,-1}&0\\[8pt]
(\beta_{m}^{-1})_{21,-1}&0
\end{pmatrix}\epsilon^{-1}+
\begin{pmatrix}
(\beta_{m}^{-1})_{11,0}& (\beta_{m}^{-1})_{12,0}\\[8pt]
(\beta_{m}^{-1})_{21,0}& (\beta_{m}^{-1})_{22,0}
\end{pmatrix}+
\begin{pmatrix}
(\beta_{m}^{-1})_{11,1}& (\beta_{m}^{-1})_{12,1}\\[8pt]
(\beta_{m}^{-1})_{21,1}& (\beta_{m}^{-1})_{22,1}
\end{pmatrix}\epsilon+O(\epsilon^2)
, &&&\epsilon\to 0,
\end{align*}
where the pole  coefficients are
 \begin{align}\label{betainv-1}
 (\beta_{m}^{-1})_{11,-1}:=&S_{D}(\beta_m)_{1}^{-1},&
 (\beta_{m}^{-1})_{21,-1}:=&-D_{m,0}^{-1}C_{m,0}S_{D}(\beta_m)_{1}^{-1},
 \end{align}
 while the regular part coefficients are
\begin{align*}
(\beta_{m}^{-1})_{11,0}:=&-S_{D}(\beta_m)_{1}^{-1}
S_{D}(\beta_m)_{2}S_{D}(\beta_m)_{1}^{-1},\\ (\beta_{m}^{-1})_{12,0}:=&
-S_{D}(\beta_m)_{1}^{-1}B_{m,1}D_{m,0}^{-1},\\
(\beta_{m}^{-1})_{21,0}:=&D_{m,0}^{-1}(C_{m,0}S_{D}(\beta_m)_{1}^{-1}
S_{D}(\beta_m)_{2}
S_{D}(\beta_m)_{1}^{-1}- (C_{m,1}
-D_{m,1}D_{m,0}^{-1}C_{m,0})S_{D}(\beta_m)_{1}^{-1}),\\
 (\beta_{m}^{-1})_{22,0}:=&D_{m,0}^{-1}\big(\mathbb{I}_{N-r}
+C_{m,0}S_{D}(\beta_m)_{1}^{-1}B_{m,1}D_{m,0}^{-1}\big),
\end{align*}
\begin{align*}
(\beta_{m}^{-1})_{11,1}:=&S_{D}(\beta_m)_{1}^{-1}
S_{D}(\beta_m)_{2}S_{D}(\beta_m)_{1}^{-1}S_{D}(\beta_m)_{2}
S_{D}(\beta_m)_{1}^{-1}-S_{D}(\beta_m)_{1}^{-1}S_{D}(\beta_m)_{3}
S_{D}(\beta_m)_{1}^{-1},\\
(\beta_{m}^{-1})_{12,1}:=&\big(S_{D}(\beta_m)_{1}^{-1}
S_{D}(\beta_m)_{2}S_{D}(\beta_m)_{1}^{-1}B_{m,1}
 -S_{D}(\beta_m)_{1}^{-1}(B_{m,2}-
B_{m,1}D_{m,0}^{-1}D_{m,1})\big)D_{m,0}^{-1},\\
(\beta_{m}^{-1})_{21,1}:=&-D_{m,0}^{-1}\Big(C_{m,0}[S_{D}(\beta_m)_{1}^{-1}
  S_{D}(\beta_m)_{2}S_{D}(\beta_m)_{1}^{-1}S_{D}(\beta_m)_{2}
  S_{D}(\beta_m)_{1}^{-1}-S_{D}(\beta_m)_{1}^{-1}
  S_{D}(\beta_m)_{3}S_{D}(\beta_m)_{1}^{-1}]\\  &-(C_{m,1}
- D_{m,1}D_{m,0}^{-1}C_{m,0})S_{D}(\beta_m)_{1}^{-1}
S_{D}(\beta_m)_{2}S_{D}(\beta_m)_{1}^{-1}+
\\  &- \big((D_{m,1}D_{m,0}^{-1}D_{m,1}-
 D_{m,2})D_{m,0}^{-1}C_{m,0} +C_{m,2}
  -D_{m,1}D_{m,0}^{-1}C_{m,1}\big)S_{D}(\beta_m)_{1}^{-1}\Big),\\
 (\beta_{m}^{-1})_{22,1}:=&D_{m,0}^{-1}\Big(-D_{m,1}+(C_{m,1}
-D_{m,1}D_{m,0}^{-1}C_{m,0}- C_{m,0}S_{D}(\beta_m)_{1}^{-1}S_{D}(\beta_m)_{2})
S_{D}(\beta_m)_{1}^{-1}B_{m,1}
\\ &+C_{m,0}S_{D}(\beta_m)_{1}^{-1}(B_{m,2}-
B_{m,1}D_{m,0}^{-1}D_{m,1})\Big)D_{m,0}^{-1},\\
(\beta_{m}^{-1})_{11,2}:=&S_{D}(\beta_m)_{1}^{-1}S_{D}(\beta_m)_{3}
S_{D}(\beta_m)_{1}^{-1}S_{D}(\beta_m)_{2}S_{D}(\beta_m)_{1}^{-1}
\\  &-S_{D}(\beta_m)_{1}^{-1}S_{D}(\beta_m)_{2}S_{D}(\beta_m)_{1}^{-1}(
S_{D}(\beta_m)_{2}S_{D}(\beta_m)_{1}^{-1}S_{D}(\beta_m)_{2}S_{D}(\beta_m)_{1}^{-1}
\\  &-S_{D}(\beta_m)_{3}S_{D}(\beta_m)_{1}^{-1})-
S_{D}(\beta_m)_{1}^{-1}S_{D}(\beta_m)_{4}S_{D}(\beta_m)_{1}^{-1},\\
(\beta_{m}^{-1})_{12,2}:=&S_{D}(\beta_m)_{1}^{-1}B_{m,1}
D_{m,0}^{-1}(D_{m,2}D_{m,0}^{-1}-D_{m,1}D_{m,0}^{-1}D_{m,1}D_{m,0}^{-1})
+S_{D}(\beta_m)_{1}^{-1}(B_{m,2}\\  &-S_{D}(\beta_m)_{2}
S_{D}(\beta_m)_{1}^{-1}B_{m,1})D_{m,0}^{-1}D_{m,1}D_{m,0}^{-1}-
S_{D}(\beta_m)_{1}^{-1}(B_{m,3}-
S_{D}(\beta_m)_{2}S_{D}(\beta_m)_{1}^{-1}B_{m,2}\\  &+
S_{D}(\beta_m)_{2}S_{D}(\beta_m)_{1}^{-1}S_{D}(\beta_m)_{2}
S_{D}(\beta_m)_{1}^{-1}B_{m,1}-S_{D}(\beta_m)_{3}S_{D}(\beta_m)_{1}^{-1}
B_{m,1})D_{m,0}^{-1}.
\end{align*}

 Finally, from  eq. \eqref{eq:painleveS} we deduce
\begin{align}
\beta_{m+1}=\begin{pmatrix}
{m}S_{D}(\beta_m)_{1}^{-1}& 0\\
-{m}D_{m,0}^{-1}C_{m,0}S_{D}(\beta_m)_{1}^{-1}
& 0
\end{pmatrix}\epsilon^{-1}+
\begin{pmatrix}
   A_{m+1,0}  & B_{m+1,0} \\
   C_{m+1,0}    & D_{m+1,0}
\end{pmatrix}+
\begin{pmatrix}
A_{m+1,1} & B_{m+1,1}\\
C_{m+1,1}& D_{m+1,1}
\end{pmatrix}\epsilon+O(\epsilon^2),\epsilon\to 0,
\label{eq:bloques1}\end{align}
 where, in terms of eqs. \eqref{betainv-1}--\eqref{betainv1},
\begin{align}
  A_{m+1,0}:=&{m}(\beta_{m}^{-1})_{11,0}-A_{m-1,0}-\alpha_{11},
&
B_{m+1,0}:=&{m}(\beta_{m}^{-1})_{12,0}-B_{m-1,0}
-\alpha_{12},\label{eq:bloques1b0}\\
 C_{m+1,0}:=&{m}(\beta_{m}^{-1})_{21,0}-
C_{m-1,0}-C_{m,0}-\alpha_{21},
&
 D_{m+1,0}:=&{m}(\beta_{m}^{-1})_{22,0}
-D_{m-1,0}-D_{m,0}-\alpha_{22},
\label{eq:bloques1d0}
\\
A_{m+1,1}:=&{m}(\beta_{m}^{-1})_{11,1}
-A_{m-1,1}-A_{m,1},&
\label{eq:bloques1b1}B_{m+1,1}:=&{m}(\beta_{m}^{-1})_{12,1}
-B_{m-1,1}-B_{m,1},
\\
C_{m+1,1}:=&{m}(\beta_{m}^{-1})_{21,1}-C_{m-1,1}-C_{m,1},&
D_{m+1,1}:=&{m}(\beta_{m}^{-1})_{22,1}-D_{m-1,1}-D_{m,1},\label{eq:bloques1d1}
\\
A_{m+1,2}:=&{m}(\beta_{m}^{-1})_{11,2}
-A_{m-1,2}-A_{m,2},&
B_{m+1,2}:=&{m}(\beta_{m}^{-1})_{12,2}-B_{m-1,2}-B_{m,2}.
\label{eq:bloques1a2}
\end{align}
Observing that
\begin{align*}
\det\beta_{m+1}&=\begin{vmatrix}
{m}S_{D}(\beta_m)_{1}^{-1}& B_{m+1,0}\\
-{m}D_{m,0}^{-1}C_{m,0}S_{D}(\beta_m)_{1}^{-1}
& D_{m+1,0}
\end{vmatrix}\epsilon^{-r}+
O(\epsilon^{-r+1})
,&\epsilon&\to 0,
\end{align*}
the result follows.
\end{proof}

Now observe that
\begin{align*}
  Z_1:=&
{m}D_{m,0}^{-1}+{m}D_{m,0}^{-1}C_{m,0}S_{D}(\beta_m)_{1}^{-1}B_{m,1}D_{m,0}^{-1}-
D_{m-1,0}-D_{m,0}-\alpha_{22}
\\&-(  -mD_{m,0}^{-1}C_{m,0}S_D(\beta_m)_1^{-1})(  mS_D(\beta_m)_1^{-1})^{-1}(
-mS_D(\beta_m)^{-1}_1B_{m,1}D_{m,0}^{-1}-B_{m-1,0}-\alpha_{12})\\=&m D_{m,0}^{-1}
-D_{m-1,0}-D_{m,0}-\alpha_{22}-D_{m,0}^{-1}C_{m,0}(B_{m-1,0}+\alpha_{12}).
\end{align*}

By using the determinant expansion in Schur complements of Lemma \ref{pro:beta1}, one observes that
  \begin{align*}
    \begin{vmatrix}
  mS_D(\beta_m)_1^{-1}&-mS_D(\beta_m)^{-1}_1B_{m,1}D_{m,0}^{-1}
  -B_{m-1,0}-\alpha_{12}
\\
  -mD_{m,0}^{-1}C_{m,0}S_D(\beta_m)_1^{-1}&{m}D_{m,0}^{-1}+
  {m}D_{m,0}^{-1}C_{m,0}S_{D}(\beta_m)_{1}^{-1}B_{m,1}D_{m,0}^{-1}
  -D_{m-1,0}-D_{m,0}-\alpha_{22}
\end{vmatrix}\\=\det\Big(mS_D(\beta_m)_1^{-1}\Big)\det Z_1.
\end{align*}
and the first point of the Theorem is proved.

Let us now go one step further in the discrete matrix chain and move to
position $m+2$.
\begin{lemma}\label{pro:beta2}
Whenever $\det D_{m,0}\neq 0$ and $\det Z_1\neq 0$ the following asymptotic
hold
   \begin{align*}
\det\beta_{m+2}=&\epsilon^{-r}\begin{vmatrix}
  -mS_D(\beta_m)_1^{-1}&
mS_D(\beta_m)^{-1}_1B_{m,1}D_{m,0}^{-1}+B_{m-1,0}
  \\
  {m}D_{m,0}^{-1}C_{m,0}S_D(\beta_m)_1^{-1}&(m+1)Z_{1}^{-1}-{m}D_{m,0}^{-1}
-{m}D_{m,0}^{-1}C_{m,0}S_{D}(\beta_m)_{1}^{-1}B_{m,1}D_{m,0}^{-1}
+D_{m-1,0}
\end{vmatrix}\\&+O(\epsilon^{-r+1})
\end{align*}
for $\epsilon\to0$.
 \end{lemma}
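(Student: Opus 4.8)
The plan is to follow the same route as the proof of Lemma \ref{pro:beta1}, advanced by one step in the chain. First I would apply \eqref{eq:painleveS} with $n=m+1$ to write $\beta_{m+2}=(m+1)\beta_{m+1}^{-1}-\beta_{m}-\beta_{m+1}-\alpha$, and take from \eqref{eq:bloques1} the block expansion of $\beta_{m+1}\in\epsilon^{-1}\mathcal A_{\mathfrak L}$: its $\epsilon^{-1}$-coefficient is $\big(\begin{smallmatrix}mS_{D}(\beta_m)_1^{-1}&0\\ -mD_{m,0}^{-1}C_{m,0}S_{D}(\beta_m)_1^{-1}&0\end{smallmatrix}\big)$ and its $\epsilon^{0}$-coefficient has blocks $A_{m+1,0},B_{m+1,0},C_{m+1,0},D_{m+1,0}$ given by \eqref{eq:bloques1b0}--\eqref{eq:bloques1d0}.

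The core of the argument is to invert $\beta_{m+1}$ up to order $O(1)$. Writing $\beta_{m+1}=\big(\begin{smallmatrix}\mathcal A&\mathcal B\\ \mathcal C&\mathcal D\end{smallmatrix}\big)$, the corner $\mathcal A=mS_{D}(\beta_m)_1^{-1}\epsilon^{-1}+O(1)$ is invertible, with $\mathcal A^{-1}=\tfrac1m S_{D}(\beta_m)_1\,\epsilon+O(\epsilon^2)$, so I would use the $S_{\mathcal A}$-branch of \eqref{eq:inversablokes}. A brief computation shows that the cross term $\mathcal C\mathcal A^{-1}\mathcal B$ is $O(1)$ with leading coefficient $-D_{m,0}^{-1}C_{m,0}B_{m+1,0}$, whence $S_{\mathcal A}(\beta_{m+1})=\mathcal D-\mathcal C\mathcal A^{-1}\mathcal B=Z_1+O(\epsilon)$ --- this is exactly why $Z_1:=D_{m+1,0}+D_{m,0}^{-1}C_{m,0}B_{m+1,0}$ is the right object to introduce. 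Hence the hypothesis $\det Z_1\neq0$ makes $S_{\mathcal A}(\beta_{m+1})$ invertible with $S_{\mathcal A}(\beta_{m+1})^{-1}=Z_1^{-1}+O(\epsilon)$, and reading off \eqref{eq:inversablokes} one obtains $\beta_{m+1}^{-1}\in\mathcal A_{\mathfrak K}$ with $\epsilon^{0}$-coefficient $\big(\begin{smallmatrix}0&0\\ Z_1^{-1}D_{m,0}^{-1}C_{m,0}&Z_1^{-1}\end{smallmatrix}\big)$.

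Next I would assemble $\beta_{m+2}$. Since $\beta_{m+1}^{-1}$, $\beta_m$ and $\alpha$ are $O(1)$, the only pole contribution comes from $-\beta_{m+1}$, so the $\epsilon^{-1}$-coefficient of $\beta_{m+2}$ is $\big(\begin{smallmatrix}-mS_{D}(\beta_m)_1^{-1}&0\\ mD_{m,0}^{-1}C_{m,0}S_{D}(\beta_m)_1^{-1}&0\end{smallmatrix}\big)\in\mathfrak L$, confirming $\beta_{m+2}\in\epsilon^{-1}\mathcal A_{\mathfrak L}$. Its $\epsilon^{0}$-coefficient is $(m+1)\beta_{m+1}^{-1}|_{\epsilon^0}-\beta_m|_{\epsilon^0}-\beta_{m+1}|_{\epsilon^0}-\alpha$; substituting $(\beta_m^{-1})_{12,0}=-S_{D}(\beta_m)_1^{-1}B_{m,1}D_{m,0}^{-1}$ and $(\beta_m^{-1})_{22,0}=D_{m,0}^{-1}(\mathbb{I}_{N-r}+C_{m,0}S_{D}(\beta_m)_1^{-1}B_{m,1}D_{m,0}^{-1})$ of Lemma \ref{pro:beta1} into \eqref{eq:bloques1b0}--\eqref{eq:bloques1d0}, the $\alpha_{12}$ and $D_{m,0}$ terms cancel and one finds the $(1,2)$ block equal to $mS_{D}(\beta_m)_1^{-1}B_{m,1}D_{m,0}^{-1}+B_{m-1,0}$ and the $(2,2)$ block equal to $(m+1)Z_1^{-1}-mD_{m,0}^{-1}-mD_{m,0}^{-1}C_{m,0}S_{D}(\beta_m)_1^{-1}B_{m,1}D_{m,0}^{-1}+D_{m-1,0}$.

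Finally, since $\beta_{m+2}\in\epsilon^{-1}\mathcal A_{\mathfrak L}$, part (4) of Proposition \ref{pro:rings} (equivalently \eqref{eq:detblokesD} together with the expansion of $S_{\mathcal A}$) expresses $\det\beta_{m+2}$ to leading order $\epsilon^{-r}$ as the determinant of the matrix built from the first column of its $\epsilon^{-1}$-coefficient and the second column of its $\epsilon^{0}$-coefficient; inserting the four blocks just computed produces precisely the asserted determinant, with remainder $O(\epsilon^{-r+1})$. The main obstacle is the bookkeeping in the two middle paragraphs: carrying the block inversion of $\beta_{m+1}$ to the correct order --- in particular verifying $S_{\mathcal A}(\beta_{m+1})=Z_1+O(\epsilon)$ and that the cross terms enter only at $O(1)$ --- and the algebraic cancellations that collapse \eqref{eq:bloques1b0}--\eqref{eq:bloques1d0} to the compact entries in the statement; these are routine but error-prone.
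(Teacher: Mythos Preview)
Your proposal is correct and follows essentially the same route as the paper: invert $\beta_{m+1}$ to leading order, plug into \eqref{eq:painleveS} to obtain the block expansion of $\beta_{m+2}\in\epsilon^{-1}\mathcal A_{\mathfrak L}$, and then apply Proposition~\ref{pro:rings}(4) to read off $\det\beta_{m+2}$. Your explicit use of the $S_{\mathcal A}$-branch of \eqref{eq:inversablokes} to show $S_{\mathcal A}(\beta_{m+1})=Z_1+O(\epsilon)$, and your simplification of $B_{m+2,0}$ and $D_{m+2,0}$ down to the entries of the stated determinant, are exactly the computations the paper leaves implicit in writing $(\beta_{m+1}^{-1})_{21,0}=Z_1^{-1}D_{m,0}^{-1}C_{m,0}$, $(\beta_{m+1}^{-1})_{22,0}=Z_1^{-1}$ and $B_{m+2,0}=-B_{m+1,0}-\alpha_{12}$, $D_{m+2,0}=(m+1)(\beta_{m+1}^{-1})_{22,0}-D_{m+1,0}-D_{m,0}-\alpha_{22}$.
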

\begin{proof}
As $\det\beta_{m+1}=O(\epsilon^{-r})$, $\epsilon\rightarrow0$, and consequently point (2) of Proposition
\ref{pro:rings} tells us that $\beta_{m+1}^{-1}\in\mathcal A_{\mathfrak K}$.  Therefore, the following asymptotic expansion for the inverse matrix holds
\begin{align}
\beta_{m+1}^{-1}=\begin{pmatrix}
0 & 0\\
(\beta_{m+1}^{-1})_{21,0} & (\beta_{m+1}^{-1})_{22,0}
\end{pmatrix}+
\begin{pmatrix}
(\beta_{m+1}^{-1})_{11,1}   & (\beta_{m+1}^{-1})_{12,1}  \\
(\beta_{m+1}^{-1})_{21,1}  & (\beta_{m+1}^{-1})_{22,1}
\end{pmatrix}\epsilon+
\begin{pmatrix}
(\beta_{m+1}^{-1})_{11,2}  & (\beta_{m+1}^{-1})_{12,2}\\
(\beta_{m+1}^{-1})_{21,2} & (\beta_{m+1}^{-1})_{22,2}
\end{pmatrix}\epsilon^2+O(\epsilon^3),
\label{eq:invbmas1}
\end{align}
for $\epsilon\to 0$. Here the blocks   $(\beta_{m+1}^{-1})_{ab,j}$
 are to be found from
the asymptotic expansion \eqref{eq:bloques1}. We conclude
 \begin{align*}
(\beta_{m+1}^{-1})_{21,0}=&Z_{1}^{-1}D_{m,0}^{-1}C_{m,0},&(\beta_{m+1}^{-1})_{22,0}=&Z_{1}^{-1}, \\
(\beta_{m+1}^{-1})_{11,1}=&\frac{1}{m}S_{D}(\beta_{m})_{1}-
\frac{1}{m}S_{D}(\beta_{m})_{1}B_{m+1,0}Z_{1}^{-1}D_{m,0}^{-1}C_{m,0},&
 (\beta_{m+1}^{-1})_{12,1}=&
-\frac{1}{m}S_{D}(\beta_{m})_{1}B_{m+1,0}Z_{1}^{-1},\end{align*}
\begin{align*}
(\beta_{m+1}^{-1})_{21,1}=&-Z_{1}^{-1}D_{m,0}^{-1}C_{m,0}
-\frac{1}{m}Z_{1}^{-1}(C_{m+1,0}+D_{m,0}^{-1}C_{m,0}A_{m+1,0})
S_{D}(\beta_{m})_{1}(\mathbb{I}_r-
B_{m+1,0}Z_{1}^{-1}D_{m,0}^{-1}C_{m,0}),\\
(\beta_{m+1}^{-1})_{22,1}=&-Z_{1}^{-1}
+\frac{1}{m}Z_{1}^{-1}(C_{m+1,0}+D_{m,0}^{-1}C_{m,0}A_{m+1,0})
S_{D}(\beta_{m})_{1}B_{m+1,0}Z_{1}^{-1},\\
(\beta_{m+1}^{-1})_{11,2}=&
-\frac{1}{m^2}S_{D}(\beta_{m})_{1}A_{m+1,0}S_{D}(\beta_{m})_{1}+
\frac{1}{m^2}S_{D}(\beta_{m})_{1}A_{m+1,0}S_{D}(\beta_{m})_{1}B_{m+1,0}
Z_{1}^{-1}D_{m,0}^{-1}C_{m,0}\\&+\frac{1}{m^2}
S_{D}(\beta_{m})_{1}B_{m+1,0}Z_{1}^{-1}(C_{m+1,0}+D_{m,0}^{-1}C_{m,0}A_{m+1,0})
S_{D}(\beta_{m})_{1}(\mathbb{I}_r-
B_{m+1,0}Z_{1}^{-1}D_{m,0}^{-1}C_{m,0}),\\
(\beta_{m+1}^{-1})_{12,2}=&
-\frac{1}{m^2}S_{D}(\beta_{m})_{1}B_{m+1,0}Z_{1}^{-1}(C_{m+1,0}+
D_{m,0}^{-1}C_{m,0}A_{m+1,0})S_{D}(\beta_{m})_{1}B_{m+1,0}Z_{1}^{-1}
\\&+\frac{1}{m^2}S_{D}(\beta_{m})_{1}A_{m+1,0}S_{D}(\beta_{m})_{1}B_{m+1,0}Z_{1}^{-1}.
\end{align*}
 If we substitute equations
(\ref{eq:bloques1b0})-(\ref{eq:bloques1a2}) into eq. (\ref{eq:painleveS}),
we have that for $\epsilon\to 0$
\begin{align}
\beta_{m+2}=\begin{pmatrix}
{-m}S_{D}(\beta_m)_{1}^{-1} & 0\\
{m}D_{m,0}^{-1}C_{m,0}S_{D}(\beta_m)_{1}^{-1}& 0
\end{pmatrix}\epsilon^{-1}+
\begin{pmatrix}
A_{m+2,0} & B_{m+2,0}\\
C_{m+2,0}  & D_{m+2,0}
\end{pmatrix}+
\begin{pmatrix}
A_{m+2,1} & B_{m+2,1}\\
C_{m+2,1}  & D_{m+2,1}
\end{pmatrix}\epsilon+O(\epsilon^2),
\label{eq:bloques2}
\end{align}
 where
\begin{align*}
A_{m+2,0}:=&-A_{m+1,0}-\alpha_{11},&
B_{m+2,0}:=&-B_{m+1,0}-\alpha_{12},\\
C_{m+2,0}:=&(m+1)(\beta_{m+1}^{-1})_{21,0}-C_{m+1,0}-C_{m,0}-\alpha_{21},&
D_{m+2,0}:=&(m+1)(\beta_{m+1}^{-1})_{22,0}-D_{m+1,0}-D_{m,0}-\alpha_{22},\\
A_{m+2,1}:=&(m+1)(\beta_{m+1}^{-1})_{11,1}-A_{m+1,1}-A_{m,1},&
B_{m+2,1}:=&(m+1)(\beta_{m+1}^{-1})_{12,1}-B_{m+1,1}-B_{m,1},\\
C_{m+2,1}:=&(m+1)(\beta_{m+1}^{-1})_{21,1}-C_{m+1,1}-C_{m,1},&
D_{m+2,1}:=&(m+1)(\beta_{m+1}^{-1})_{22,1}-D_{m+1,1}-D_{m,1},\\
A_{m+2,2}:=&(m+1)(\beta_{m+1}^{-1})_{11,2}-A_{m+1,2}-A_{m,2},&
B_{m+2,2}:=&(m+1)(\beta_{m+1}^{-1})_{12,2}-B_{m+1,2}-B_{m,2}.
\end{align*}
Now, observing that
\begin{align*}
\det\beta_{m+2}=&\begin{vmatrix}
-{m}S_{D}(\beta_m)_{1}^{-1}& B_{m+2,0}\\
{m}D_{m,0}^{-1}C_{m,0}S_{D}(\beta_m)_{1}^{-1}
& D_{m+2,0}
\end{vmatrix}\epsilon^{-r}+
O(\epsilon^{-r+1})
,&\epsilon&\to 0,
\end{align*}
the result follows.
\end{proof}
Notice that
\begin{multline*}
  Z_2:=
(m+1)({m}D_{m,0}^{-1}-D_{m,0}^{-1}C_{m,0}(B_{m-1,0}+\alpha_{12})-
D_{m-1,0}-D_{m,0}-\alpha_{22})^{-1}\\+D_{m,0}^{-1}C_{m,0}B_{m-1,0}
-{m}D_{m,0}^{-1}+D_{m-1,0}.
\end{multline*}

We expand the determinant according to
  Schur complements, obtaining
  \begin{align*}\\
\begin{vmatrix}
  -mS_D(\beta_m)_1^{-1}&
mS_D(\beta_m)^{-1}_1B_{m,1}D_{m,0}^{-1}+B_{m-1,0}
  \\
  mD_{m,0}^{-1}C_{m,0}S_D(\beta_m)_1^{-1}&(m+1)Z_{1}^{-1}-{m}D_{m,0}^{-1}
-{m}D_{m,0}^{-1}C_{m,0}S_{D}(\beta_m)_{1}^{-1}B_{m,1}D_{m,0}^{-1}
+D_{m-1,0}
\end{vmatrix}\\=\det\Big(-mS_D(\beta_m)_1^{-1}\Big)\det Z_2
\end{align*}
from which the second point of the Theorem follows immediately.

\begin{lemma}\label{pro:beta33}
Assuming that $\det D_{m,0}\neq 0$, $\det Z_1\neq 0$ and $\det Z_2\neq 0$ the
following asymptotic expansion for $\epsilon \to 0$ holds
  {\small\begin{align*}
\det\beta_{m+3}=&\epsilon^{r}\begin{vmatrix}
  (m+2)(\beta_{m+2}^{-1})_{11,1}
-(m+1)(\beta_{m+1}^{-1})_{11,1}+A_{m,1} & (m+2)(\beta_{m+2}^{-1})_{12,1}
-(m+1)(\beta_{m+1}^{-1})_{12,1}+B_{m,1}
  \\
  (m+2)(\beta_{m+2}^{-1})_{21,0}
-(m+1)(\beta_{m+1}^{-1})_{21,0}+C_{m,0}& (m+2)(\beta_{m+2}^{-1})_{22,0}
-(m+1)(\beta_{m+1}^{-1})_{22,0}+D_{m,0}
\end{vmatrix}\\&+O(\epsilon^{r+1}),
\end{align*}}
where
\begin{align*}
(\beta_{m+2}^{-1})_{21,0}:=&Z_{2}^{-1}D_{m,0}^{-1}C_{m,0},
&
(\beta_{m+2}^{-1})_{22,0}:=&Z_{2}^{-1},\\
(\beta_{m+2}^{-1})_{11,1}:=&-\frac{1}{m}S_{D}(\beta_m)_{1}(\mathbb{I}_r-
B_{m+2,0}Z_{2}^{-1}D_{m,0}^{-1}C_{m,0}),&
(\beta_{m+2}^{-1})_{12,1}:=&\frac{1}{m}S_{D}(\beta_m)_{1}B_{m+2,0}Z_{2}^{-1},
\end{align*}
\begin{align*}
(\beta_{m+2}^{-1})_{21,1}:=&
-Z_{2}^{-1}D_{m,0}^{-1}C_{m,0}+\frac{1}{m}Z_{2}^{-1}(C_{m+2,0}+
D_{m,0}^{-1}C_{m,0}A_{m+2,0})S_{D}(\beta_{m})_{1}(\mathbb{I}_r
-B_{m+2,0}Z_{2}^{-1}D_{m,0}^{-1}C_{m,0}),\\
(\beta_{m+2}^{-1})_{22,1}:=&-Z_{2}^{-1}-\frac{1}{m}Z_{2}^{-1}(C_{m+2,0}+
D_{m,0}^{-1}C_{m,0}A_{m+2,0})S_{D}(\beta_{m})_{1}B_{m+2,0}Z_{2}^{-1},\\
(\beta_{m+2}^{-1})_{11,2}:=&
\frac{1}{m^2}S_{D}(\beta_{m})_{1}B_{m+2,0}Z_{2}^{-1}(C_{m+2,0}+
D_{m,0}^{-1}C_{m,0}A_{m+2,0})S_{D}(\beta_{m})_{1}(\mathbb{I}_r-
B_{m+2,0}Z_{2}^{-1}D_{m,0}^{-1}C_{m,0}) \\&-
\frac{1}{m^2}S_{D}(\beta_{m})_{1}A_{m+2,0}S_{D}(\beta_{m})_{1}(\mathbb{I}_r-
B_{m+2,0}Z_{2}^{-1}D_{m,0}^{-1}C_{m,0}) ,\\
(\beta_{m+2}^{-1})_{12,2}:=&
\frac{1}{m^2}S_{D}(\beta_{m})_{1}A_{m+2,0}S_{D}(\beta_{m})_{1}
B_{m+2,0}Z_{2}^{-1}\\&-
\frac{1}{m^2}S_{D}(\beta_{m})_{1}B_{m+2,0}Z_{2}^{-1}(C_{m+2,0}+
D_{m,0}^{-1}C_{m,0}A_{m+2,0})S_{D}(\beta_{m})_{1}B_{m+2,0}Z_{2}^{-1}.
\end{align*}
\end{lemma}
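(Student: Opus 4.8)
The plan is to reuse, one step further down the chain, the Schur-complement machinery already deployed for $\beta_{m+1}$ in Lemma \ref{pro:beta2}. First I would note that, under the standing hypotheses $\det D_{m,0}\neq0$, $\det Z_1\neq0$, $\det Z_2\neq0$, the first two points of Theorem \ref{generico} (already established) give $\det\beta_{m+1}=O(\epsilon^{-r})$ and $\det\beta_{m+2}=O(\epsilon^{-r})$ as $\epsilon\to0$, whence point (2) of Proposition \ref{pro:rings} yields $\beta_{m+1}^{-1}\in\mathcal A_{\mathfrak K}$ (as in \eqref{eq:betamenosuno2}) and $\beta_{m+2}^{-1}\in\mathcal A_{\mathfrak K}$. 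The blocks $(\beta_{m+1}^{-1})_{ab,j}$ are already tabulated in the proof of Lemma \ref{pro:beta2}, so the only new computation is the analogous expansion of $\beta_{m+2}^{-1}$.

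For that I would invert the block matrix \eqref{eq:bloques2} through its $(1,1)$-block $\mathcal A:=-mS_{D}(\beta_m)_{1}^{-1}\epsilon^{-1}+A_{m+2,0}+O(\epsilon)$, which is invertible for small $\epsilon$ because $S_{D}(\beta_m)_{1}$ is invertible by \eqref{eq:betauno}; this places us in the second case of \eqref{eq:inversablokes}, and it is legitimate since the corresponding Schur complement is also invertible for small $\epsilon$, its value at $\epsilon=0$ being $S_{\mathcal A}(\beta_{m+2})|_{\epsilon=0}=D_{m+2,0}+D_{m,0}^{-1}C_{m,0}B_{m+2,0}=Z_2$. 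Expanding $\mathcal A^{-1}=-\tfrac1m S_{D}(\beta_m)_{1}\epsilon-\tfrac1{m^2}S_{D}(\beta_m)_{1}A_{m+2,0}S_{D}(\beta_m)_{1}\epsilon^{2}+O(\epsilon^{3})$, and hence $\mathcal C\mathcal A^{-1}=-D_{m,0}^{-1}C_{m,0}-\tfrac1m(C_{m+2,0}+D_{m,0}^{-1}C_{m,0}A_{m+2,0})S_{D}(\beta_m)_{1}\epsilon+O(\epsilon^{2})$ and $S_{\mathcal A}(\beta_{m+2})^{-1}=Z_2^{-1}+O(\epsilon)$, I would substitute into the inversion formula and match powers of $\epsilon$ to read off the blocks $(\beta_{m+2}^{-1})_{ab,j}$ displayed in the statement. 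Structurally this repeats the computation of $\beta_{m+1}^{-1}$ from Lemma \ref{pro:beta2} under the substitutions $mS_{D}(\beta_m)_{1}^{-1}\mapsto-mS_{D}(\beta_m)_{1}^{-1}$, $Z_1\mapsto Z_2$ and $(A,B,C,D)_{m+1,0}\mapsto(A,B,C,D)_{m+2,0}$.

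With both inverses available I would apply \eqref{eq:painleves}, namely $\beta_{m+3}=\beta_{m}-(m+1)\beta_{m+1}^{-1}+(m+2)\beta_{m+2}^{-1}$. Each summand lies in $\mathcal A_{\mathfrak K}$ ($\beta_m$ by \eqref{eq:betauno2}, the two inverses by the previous step), so $\beta_{m+3}\in\mathcal A_{\mathfrak K}$; moreover the $\epsilon^0$-parts of the top blocks of $\beta_m$, $\beta_{m+1}^{-1}$ and $\beta_{m+2}^{-1}$ all vanish, so that the leading bottom blocks of $\beta_{m+3}$ are $C_{m+3,0}=(m+2)(\beta_{m+2}^{-1})_{21,0}-(m+1)(\beta_{m+1}^{-1})_{21,0}+C_{m,0}$ and $D_{m+3,0}=(m+2)(\beta_{m+2}^{-1})_{22,0}-(m+1)(\beta_{m+1}^{-1})_{22,0}+D_{m,0}$, while the order-$\epsilon$ coefficients of the top blocks are $A_{m+3,1}=(m+2)(\beta_{m+2}^{-1})_{11,1}-(m+1)(\beta_{m+1}^{-1})_{11,1}+A_{m,1}$ and $B_{m+3,1}=(m+2)(\beta_{m+2}^{-1})_{12,1}-(m+1)(\beta_{m+1}^{-1})_{12,1}+B_{m,1}$. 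Feeding $K=\beta_{m+3}\in\mathcal A_{\mathfrak K}$ into the determinant formula of point (3) of Proposition \ref{pro:rings} then produces exactly $\det\beta_{m+3}=\epsilon^{r}\det\Big(\begin{smallmatrix}A_{m+3,1}&B_{m+3,1}\\ C_{m+3,0}&D_{m+3,0}\end{smallmatrix}\Big)+O(\epsilon^{r+1})$, which is the claim.

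The main obstacle is the middle paragraph: one must propagate the Schur-complement expansions \eqref{eq:schur1} two orders deep and keep everything in closed form in terms of $Z_2$, $S_{D}(\beta_m)_{1}$ and the $m+2$ blocks. The one genuine subtlety relative to Lemma \ref{pro:beta1} is that $\beta_{m+2}$ has a true pole in its $(1,1)$-block, so the inversion must be organised around the Schur complement $S_{\mathcal A}$ rather than $S_{D}$; past that, the bookkeeping, while long, is mechanical and mirrors the one carried out one step earlier, so I would present it by analogy rather than redo it in full.
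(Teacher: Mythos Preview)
Your proposal is correct and follows essentially the same route as the paper: invert $\beta_{m+2}$ via the Schur complement with respect to its invertible $(1,1)$-block (the paper just says the blocks of $\beta_{m+2}^{-1}$ ``are determined by the asymptotic expansion \eqref{eq:bloques2}'' without spelling out, as you do, that this forces the $S_{\mathcal A}$ branch of \eqref{eq:inversablokes}), combine with the already known expansion of $\beta_{m+1}^{-1}$ through the telescoped relation \eqref{eq:painleves}, and finish with point~(3) of Proposition~\ref{pro:rings}. Your explicit identification $S_{\mathcal A}(\beta_{m+2})|_{\epsilon=0}=Z_2$ and the substitution rule $Z_1\mapsto Z_2$, $(A,B,C,D)_{m+1,0}\mapsto(A,B,C,D)_{m+2,0}$ are exactly what the paper uses implicitly.
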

\begin{proof}
From equation \eqref{eq:bloques2} we get that $\beta_{m+2}\in{\mathbb L}$. Therefore, since ${\det}$ $Z_{2}{\neq}0$,
we have
\begin{align}
\beta_{m+2}^{-1}=\begin{pmatrix}
0 & 0\\
(\beta_{m+2}^{-1})_{21,0} & (\beta_{m+2}^{-1})_{22,0}
\end{pmatrix}+
\begin{pmatrix}
(\beta_{m+2}^{-1})_{11,1}   & (\beta_{m+2}^{-1})_{12,1}  \\
(\beta_{m+2}^{-1})_{21,1}  & (\beta_{m+2}^{-1})_{22,1}
\end{pmatrix}\epsilon+
\begin{pmatrix}
(\beta_{m+2}^{-1})_{11,2}  & (\beta_{m+2}^{-1})_{12,2}\\
(\beta_{m+2}^{-1})_{21,2} & (\beta_{m+2}^{-1})_{22,2}
\end{pmatrix}\epsilon^2+O(\epsilon^3),
\label{eq:invbmas2}
\end{align}
 where the blocks $(\beta_{m+2}^{-1})_{ab,j}$, are determined by
 the
 asymptotic expansion \eqref{eq:bloques2}.  If we substitute
 \eqref{eq:bloques1},
 \eqref{eq:bloques2} and (\ref{eq:invbmas2}) into the
 matrix equation \eqref{eq:painleveS},
 we have that
\begin{align*}
\beta_{m+3}=\begin{pmatrix}
0 & 0\\
C_{m+3,0} & D_{m+3,0}
\end{pmatrix}+
\begin{pmatrix}
A_{m+3,1}   & B_{m+3,1}  \\
C_{m+3,1}  & D_{m+3,1}
\end{pmatrix}\epsilon+
\begin{pmatrix}
A_{m+3,2}  & B_{m+3,2}\\
C_{m+3,2} & D_{m+3,2}
\end{pmatrix}\epsilon^2+O(\epsilon^3),
\end{align*}
 where
 {\small\begin{align*}
   C_{m+3,0}:=&(m+2)(\beta_{m+2}^{-1})_{21,0}-(m+1)(\beta_{m+1}^{-1})_{21,0}+C_{m,0},
   &
   D_{m+3,0}:=&(m+2)(\beta_{m+2}^{-1})_{22,0}-(m+1)(\beta_{m+1}^{-1})_{22,0}+D_{m,0},\\
   A_{m+3,1}:=&(m+2)(\beta_{m+2}^{-1})_{11,1}-(m+1)(\beta_{m+1}^{-1})_{11,1}+A_{m,1},&
   B_{m+3,1}:=&(m+2)(\beta_{m+2}^{-1})_{12,1}-(m+1)(\beta_{m+1}^{-1})_{12,1}+B_{m,1},&
   \\
   C_{m+3,1}:=&(m+2)(\beta_{m+2}^{-1})_{21,1}-(m+1)(\beta_{m+1}^{-1})_{21,1}+C_{m,1},&
   D_{m+3,1}:=&(m+2)(\beta_{m+2}^{-1})_{22,1}-(m+1)(\beta_{m+1}^{-1})_{22,1}+D_{m,1},\\
   A_{m+3,2}:=&(m+2)(\beta_{m+2}^{-1})_{11,2}-(m+1)(\beta_{m+1}^{-1})_{11,2}+A_{m,2},
   &
   B_{m+3,2}:=&(m+2)(\beta_{m+2}^{-1})_{12,2}-(m+1)(\beta_{m+1}^{-1})_{12,2}+B_{m,2}.
\end{align*}}
Then, if we use again Proposition \ref{pro:rings}, we  deduce
{\small \begin{align}
\det \beta_{m+3}&=\epsilon^{r}\begin{vmatrix}
A_{m+3,1} & B_{m+3,1}\\
C_{m+3,0} & D_{m+3,0}
\end{vmatrix}+O(\epsilon^{r+1}), & \epsilon&\to 0,
\label{eq:detbetam3}
\end{align}}
and the result follows.
\end{proof}
\noi Notice that
\begin{align*}
  Z_3=D_{m,0}-(m+1)Z_1^{-1}+(m+2)Z_2^{-1}.
\end{align*}
Notice the similarity with eq. \eqref{eq:painleves}.

Taking into account that
\begin{align}\label{eq:m+3}
  C_{m+3,0}=&Z_3D_{m,0}^{-1} C_{m,0}, & D_{m+3,0}=Z_3,
\end{align}
we express the determinant in equation \eqref{eq:detbetam3}  as follows
\begin{align}
\begin{vmatrix}
A_{m+3,1} & B_{m+3,1}\\
C_{m+3,0} & D_{m+3,0}
\end{vmatrix}={\det}Z_3\det(A_{m+3,1}-B_{m+3,1}D_{m,0}^{-1}C_{m,0}),
\end{align}
where \begin{align*}
  A_{m+3,1}-B_{m+3,1}D_{m,0}^{-1}C_{m,0}=
-\frac{(m+3)}{m}S_{D}(\beta_m)_1.
\end{align*}
 This implies that the determinant in equation \eqref{eq:detbetam3} vanishes
 if and only if
\begin{align*}
\det Z_3=0.
\end{align*}

Finally, under the previous hypotheses, eqs. \eqref{eq:invert1}-\eqref{eq:invert3}
hold. As a by product of the proof of Theorem \ref{main}, we get that
\begin{align*}
\beta_{m+4}=\beta_{m+3}^{-1} A-\beta_{m+3}-\alpha,
\end{align*}
where $\beta_{m+3}, A \in\mathcal A_{\mathfrak K}$ and
$(\beta_{m+3})^{-1}\in\epsilon^{-1}\mathcal A_{\mathfrak L}$.  According to
Proposition \ref{pro:rings} (6), $\beta_{m+3}^{-1} A\in \mathcal A$, so that
we can write
\begin{align*}
\beta_{m+4}&=O(1), & \epsilon&\to 0.
\end{align*}
We can write  the matrix dynamical system \eqref{eq:painleveS} as
\begin{align}
\beta_{n-1}=n\beta_{n}^{-1}-\beta_{n+1}-\beta_{n}-\alpha,
\label{eq:painleveS2}
\end{align}
which can be seen as the application of a \textit{time reversal symmetry}. From $\beta_{m+4}\in\mathcal A$ and $\beta_{m+3}\in\mathcal A_{\mathfrak K}$, understood now as initial
conditions, we get the quantities $\beta_{m+2}$, $\beta_{m+1}$, $\beta_{m}$ and $\beta_{m-1}$. Observe that our initial assumption was precisely that
$\beta_{m-1}\in\mathcal A$ and $\beta_m\in\mathcal A_{\mathfrak K}$, see \eqref{eq:betacero} and \eqref{eq:betauno}. Hence, the whole forward process,
and its conclusions about the asymptotic behaviours, can be reversed backwards. Consequently, since the assumption that $\det\beta_{m+4,0}=0$  reduces the number of free parameters from $N^2$ to $N^2-1$, we conclude that $\beta_{m-1,0}$ involves at most $N^2-1$ free parameters (if no
further constraint is requested). This is in contradiction to our departing hypothesis that $\beta_{m-1,0}$ has $N^2$ free parameters.  Therefore $\det
\beta_{m+4}=O(1)$ as $\epsilon\to 0$ generically.

\end{document}